%%sb revision starting 11/27/07.
%------------------------------------------------------------------------------
% Beginning of genquad.tex
%------------------------------------------------------------------------------
%
% AMS-LaTeX 1.2 sample file for journals, based on amsart.cls.
%
% Replace amsart by the documentclass for the target journal, e.g. tran-l.
%
\documentclass[10pt]{amsart}
\usepackage{graphicx,color,epsf}
\usepackage [cmtip,arrow]{xy}
\usepackage {pb-diagram,pb-xy} 

\usepackage{amsmath}
\newtheorem{theorem}{Theorem}[section]
\newtheorem{lemma}[theorem]{Lemma}

\newtheorem{proposition}[theorem]{Proposition}

\theoremstyle{definition}
\newtheorem{definition}[theorem]{Definition}
\newtheorem{example}[theorem]{Example}

\theoremstyle{remark}

\numberwithin{equation}{section}

%    Absolute value notation

%    Blank box placeholder for figures (to avoid requiring any
%    particular graphics capabilities for printing this document).

%%\newenvironment{proof}[1]{\trivlist \item[\hskip \labelsep{\bf
%%#1}]}{\hfill\mbox{$\Box$} \endtrivlist}

\newcommand {\hide}[1]{}
\newtheorem{notation}{Notation}

\newcommand {\junk}[1]{}
 \newtheorem{algorithm}{\sc Algorithm}

\newcommand {\R} {\mbox{\rm R}}
\newcommand {\s}        {\mbox{\rm sign}}

\newcommand {\Real}[1]   {\mbox{$\mathbb{R}^{#1}$}}
  % Real space
     %Complex space
                  % R^d
                 % R^dn
                   % R^2
%%\newcommand {\Sphere}[1] {\mbox{${\bf S}^{#1}$}}     % Sphere
\newcommand {\Sphere}{\mbox{${\bf S}$}}     % Sphere
                  % R^m
 \newcommand {\re}         {\Real{}}

\newcommand {\Q}         {\mathbb{Q}}

\newcommand {\RR} {{\mathcal R}}

\newcommand {\la}   {{\langle}}
\newcommand {\ra}   {{\rangle}}
\newcommand {\eps} {{\varepsilon}}
\newcommand {\E} {{\rm Ext}}
\newcommand {\dist} {{\rm dist}}

\newcommand {\Id} {\mbox{\rm Id}}

 %projective space

\newcommand {\spanof} {{\rm span}}
\newcommand {\Tot} {{\rm Tot}}

\def\addots{\mathinner{\mkern1mu
\raise1pt\vbox{\kern7pt\hbox{.}}
\mkern2mu\raise4pt\hbox{.}\mkern2mu
\raise7pt\hbox{.}\mkern1mu}}

\newcommand{\HH}  {\mbox{\rm H}}
\newcommand{\Ch}  {\mbox{\rm C}}

\newcommand{\NN}  {{\mathcal N}}

\DeclareMathOperator{\diag}{diag}

\begin{document}
\title[]
{Computing the 
Betti numbers of semi-algebraic sets defined by partly quadratic systems
of polynomials}
%    Information for first author
\author{Saugata Basu}
%    Address of record for the research reported here
\address{School of Mathematics,
Georgia Institute of Technology, Atlanta, GA 30332, U.S.A.}
%    Current address
\email{saugata.basu@math.gatech.edu}
\author{Dmitrii V. Pasechnik}
\address{
MAS Division, School of Physical and Mathematical Sciences,
Nanyang Technological University,
21 Nanyang Link,
Singapore 637371.
}
\email{dima@ntu.edu.sg}
\author{Marie-Fran\c{c}oise Roy}
\address{IRMAR (URA CNRS 305), 
Universit\'{e} de Rennes I,
Campus de Beaulieu 35042 Rennes cedex FRANCE.}
\email{marie-francoise.roy@univ-rennes1.fr}

\thanks{The first author was supported in part by an NSF 
grant CCF-0634907. 
The second author was supported
by an NTU startup grant.
Part of this work was done when the authors
were visiting the Institute of
Mathematics and Its Application, Minneapolis, IRMAR (Rennes) and
the Nanyang Technological University, Singapore.} 

%    General info
\subjclass{Primary 14P10, 14P25; Secondary 68W30}

%%\date{January 1, 1994 and, in revised form, June 22, 1994.}

%%\dedicatory{This paper is dedicated to our authors.}

\keywords{Betti numbers, Quadratic inequalities, Semi-algebraic sets}

\begin{abstract}
Let $\R$ be a real closed field,
$
{\mathcal Q} \subset \R[Y_1,\ldots,Y_\ell,X_1,\ldots,X_k],
$
with
$
\deg_{Y}(Q) \leq 2, \deg_{X}(Q) \leq d, Q \in {\mathcal Q}, \#({\mathcal Q})=m$, 
and
$
{\mathcal P}
\subset \R[X_1,\ldots,X_k]
$
with 
$\deg_{X}(P) \leq d, P \in {\mathcal P}, \#({\mathcal P})=s$.
Let $S \subset \R^{\ell+k}$   be
a semi-algebraic set defined by a 
Boolean formula without negations, with
atoms  $P=0, P \geq 0, P \leq 0, \;
P \in {\mathcal P} \cup {\mathcal Q}$.
We describe an algorithm for computing the 
the Betti numbers of $S$
generalizing a similar algorithm described in \cite{Bas05-top}.
The complexity of the algorithm is bounded by
$(\ell s m d)^{2^{O(m+k)}}$. 
The complexity of the algorithm
interpolates between the doubly exponential
time bounds for the known algorithms in the general case, and the polynomial
complexity in case of semi-algebraic sets defined by few quadratic 
inequalities \cite{Bas05-top}.
Moreover, for fixed $m$ and $k$ this algorithm
has polynomial time complexity in the remaining parameters.
\end{abstract}

\maketitle

\section{Introduction and Main Results}
\label{sec:intro}
Let $\R$ be a real closed field and $S \subset \R^k$ a semi-algebraic set
defined by a
Boolean formula with atoms of the form
$P > 0, P < 0, P=0$ for $P \in {\mathcal P} \subset \R[X_1,\ldots,X_k]$.
We call $S$ a ${\mathcal P}$-semi-algebraic set 
and the Boolean formula defining $S$ a ${\mathcal P}$-formula.
If, instead, the Boolean formula
has atoms of the form $P=0, P \geq 0, P \leq 0, \;P \in {\mathcal P}$, 
and additionally contains no negation,
then we will call $S$ a ${\mathcal P}$-closed semi-algebraic set,
and the formula defining $S$ a ${\mathcal P}$-closed formula. 
Moreover, we call a ${\mathcal P}$-closed semi-algebraic set $S$ basic
if the ${\mathcal P}$-closed formula defining $S$ is a conjunction of
atoms of the form $P=0, P \geq 0, P \leq 0, \;P \in {\mathcal P}$.

For any closed semi-algebraic set $X \subset \R^k$, 
we denote by $b_i(X)$ the dimension  of the
$\Q$-vector space, $\HH_i(X,\Q)$, which is the $i$-th homology group of $X$
with coefficients in $\Q$.  We refer to \cite{BPRbook2} for the definition of
homology in the case of $\R$ being 
an arbitrary real closed field, 
not necessarily the field of real numbers.

\subsection{Brief History}
Designing efficient algorithms of computing the 
Betti numbers of semi-algebraic sets is an
important problem which has been considered by several authors. 
We give a brief description of the recent advances 
and direct the reader to the survey article \cite{Basu_survey} 
for a more detailed exposition.

For general semi-algebraic sets the best known algorithm for computing 
all the Betti numbers is via triangulation using cylindrical algebraic 
decomposition (see for example \cite{BPRbook2}) whose 
complexity is {\em doubly exponential} in the number of variables. 
There have been some small advances in obtaining singly exponential
time algorithms for computing some of the Betti numbers, but we are still
very far from having an algorithm for computing all the Betti numbers
of a given semi-algebraic set in singly exponential time.
Singly exponential time algorithms for computing the number of connected
components of a semi-algebraic set $S$ (i.e. $b_0(S)$) has been known
for quite some time \cite{GV92,Canny93a,GR92,HRS94,BPR99}.
More recently,  an algorithm with singly exponential complexity 
is given  in \cite{BPRbettione}
for computing the first Betti number of  semi-algebraic sets.
The above result is generalized in \cite{Bas05-first}, 
where a singly exponential time algorithm is given 
for computing the first $\ell$ Betti
numbers of semi-algebraic sets, where $\ell$ is allowed to be any
constant. Finally, note that singly exponential
time algorithm is also known for computing the Euler-Poincar\'e characteristic
(which is the alternating sum of Betti numbers) 
of semi-algebraic sets \cite{B99}.

In another direction, several researchers have considered a special
class of semi-algebraic sets -- namely, semi-algebraic sets defined
using quadratic polynomials. While the topology
of such sets can be arbitrarily complicated 
(since any semi-algebraic set can be defined as the image under
a linear projection of a semi-algebraic defined by quadratic 
inequalities),
it is possible to prove bounds on the Betti numbers of such sets 
which are
polynomial in the number of variables and exponential in only the
number of inequalities \cite{Bar97,Bas05-first-Kettner}.
(In contrast a semi-algebraic set defined by
a single polynomial of degree $> 2$ can have exponentially large
Betti numbers.)
Polynomial time algorithms for testing emptiness of such sets
(where the number of inequalities is fixed) 
were given in \cite{Bar93, GrPa04}.
A polynomial time algorithm (without any restriction on the
number of inequalities) is given 
in \cite{Bas05-top}(see also \cite{Bas05-top-errata})
for computing a
constant number of the top Betti numbers of semi-algebraic sets 
defined by quadratic inequalities. If moreover 
the number of inequalities is fixed then
the algorithm computes all the Betti numbers in polynomial time.
More precisely, an algorithm is described which takes as input a 
semi-algebraic set, $S$, defined by 
$Q_1 \geq 0,\ldots,Q_m \geq 0$, where each $Q_i \in \R[Y_1,\ldots,X_\ell]$
has degree $\leq 2,$
and computes the top $p$ Betti numbers of $S$,
$b_{k-1}(S), \ldots, b_{k-p}(S),$ in polynomial time. 
The complexity of the algorithm is
$ 
\sum_{i=0}^{p+2} {m \choose i} \ell^{2^{O(\min(p,m))}}.
$
For fixed $m$, we obtain by letting $p = \ell$, 
an algorithm for computing all
the Betti numbers of $S$ whose complexity is $\ell^{2^{O(m)}}$.

The goal of this paper is to design an algorithm for computing the 
Betti numbers of semi-algebraic sets defined in 
terms of {\em partly quadratic} systems
of polynomials whose complexity interpolates between the doubly exponential
time bounds for the known algorithms in the general case, and the polynomial
complexity in case of semi-algebraic sets defined by few quadratic 
inequalities. 
Our algorithm is partly based on techniques  developed in
\cite{BP'R07jems}, where we prove a quantitative result 
{\em bounding} the Betti numbers of semi-algebraic
sets defined by partly quadratic systems of polynomials.
Before stating this result we introduce some notation that we are going to
fix for the rest of the paper.

\begin{notation}
We denote by
\begin{itemize}
\item
${\mathcal Q}\subset  \R[Y_1,\ldots,Y_\ell,X_1,\ldots,X_k]$,
a family of polynomials
with 
\[
\deg_{Y}(Q) \leq 2, 
\deg_{X}(Q) \leq d,  Q\in {\mathcal Q}, \#({\mathcal Q})=m,
\]
and by
\item
${\mathcal P} \subset \R[X_1,\ldots,X_k]$
a family of polynomials 
with
\[
\deg_{X}(P) \leq d, P \in {\mathcal P}, \#({\mathcal P})=s.
\]
\end{itemize}
\end{notation}

The following theorem is proved in \cite{BP'R07jems}.

\begin{theorem}
\label{the:main}
Let $S \subset \R^{\ell+k}$ 
be a $({\mathcal P} \cup {\mathcal Q})$-closed semi-algebraic set. Then
$$
\displaylines{
b(S) \leq 
\ell^2 (O(s+\ell+m)\ell d)^{k+2m}. 
}
$$
In particular, for $m \leq \ell$, we have
$
\displaystyle{
b(S) \leq \ell^2 (O(s+\ell)\ell d)^{k+2m}. 
}
$
\end{theorem}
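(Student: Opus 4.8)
The strategy is to transplant to the partly quadratic setting the two ingredients behind the known polynomial bounds for sets defined by few quadratic inequalities: the Barvinok substitution, which replaces a family of $m$ quadrics by a single quadric depending on a point of an $(m-1)$-dimensional simplex \cite{Bar97,Bas05-top}, and the Leray spectral sequence of the resulting projection. The variables $X_1,\dots,X_k$ and the family $\mathcal P$ are carried along merely as parameters --- they raise the ambient dimension and the degrees (hence the factors $d$, $\ell d$, $s$) but are never treated quadratically. As preliminary reductions, working over $\R\la\eps\ra$, I would intersect $S$ with a ball of radius $1/\eps$ (this preserves all Betti numbers, by the conical structure at infinity \cite{BPRbook2}, and adds only quadratic constraints), homogenize each $Q_i\in\R[Y_1,\dots,Y_\ell,X]$ with a new variable $Y_0$ so that $\bar Q_i=Y^{\top}\bar A_i(X)Y$ for a symmetric $(\ell+1)\times(\ell+1)$ matrix $\bar A_i(X)$ with entries of degree $\le d$ in $X$, and reduce the $\mathcal P$-part to a \emph{basic} closed set $W\subset\R\la\eps\ra^{k}$ at the cost of a combinatorial factor $(O(sd))^{k}$ (covering by basic closed sets plus a Mayer--Vietoris spectral sequence, as in \cite{Bas05-top,BPRbook2}); a monotone Boolean combination of the $\mathcal Q$-atoms that is not a conjunction will be accommodated below by replacing the simplex with the appropriate union of its faces, which does not raise its dimension. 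It thus suffices to bound $b(S')$, where $S'=\{(y,x)\in S^{\ell}\times W:\bar Q_i(y,x)\ge 0,\ 1\le i\le m\}$.

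Write $\bar Q_\omega=\sum_i\omega_i\bar Q_i$, $\bar A_\omega(x)=\sum_i\omega_i\bar A_i(x)$, $\Delta=\{\omega\in\R^m:\sum_i\omega_i=1,\ \omega_i\ge 0\}$, and consider
\[
\mathcal A=\{(\omega,y,x)\in\Delta\times S^{\ell}\times W\ :\ \bar Q_\omega(y,x)<0\}
\]
(the strict inequality is harmless over $\R\la\eps\ra$). The projection $p_2\colon\mathcal A\to S^{\ell}\times W$ has fibre over $(y,x)$ the intersection of $\Delta$ with the open half-space $\langle\omega,(\bar Q_i(y,x))_i\rangle<0$: it is convex, hence contractible when non-empty and empty exactly when $(y,x)\in S'$ (up to an infinitesimal perturbation). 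By a Vietoris--Begle-type argument (degeneration of the Leray spectral sequence of $p_2$), $\mathcal A$ has the cohomology of the complement in $S^{\ell}\times W$ of that perturbation of $S'$, and an Alexander--Lefschetz-type complementation argument bounds $b(S')$ in terms of $b(\mathcal A)$ and $b(S^{\ell}\times W)\le (O(sd))^{k}$. The projection $p_1\colon\mathcal A\to\Delta\times W$ has fibre over $(\omega,x)$ the subset $\{y\in S^{\ell}:\bar Q_\omega(y,x)<0\}$ of the sphere cut out by a single quadratic form, which deformation retracts onto the equatorial subsphere $S^{\,i_-(\omega,x)-1}$ ($i_-$ the negative inertia index of $\bar A_\omega(x)$); its Betti numbers are therefore at most $2$.

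Now bound $b(\mathcal A)$ by the Leray spectral sequence of $p_1$: $b(\mathcal A)\le\sum_{p,q}\dim\HH^{p}(\Delta\times W,\mathcal H^{q})$, with $\mathcal H^{q}$ the sheaf whose stalk at $(\omega,x)$ is $\HH^{q}(p_1^{-1}(\omega,x))$. By the above $\mathcal H^{q}$ vanishes for $q\notin\{0,\dots,\ell\}$, and the homeomorphism type of the fibre depends only on the signature of $\bar A_\omega(x)$; thus $\mathcal H^{q}$ is locally constant along each stratum of a semi-algebraic stratification of $\Delta\times W$ on which this signature is constant. After perturbing $\bar A_\omega(x)$ by an infinitesimal diagonal matrix, Jacobi's signature rule lets one take this stratification to be defined by the $\le\ell+1$ leading principal minors of $\bar A_\omega(x)$ --- polynomials of degree $\le\ell+1$ in $\omega$ and $\le(\ell+1)d$ in $x$ --- together with the $\le s$ polynomials defining $W$. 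Hence $\dim\HH^{p}(\Delta\times W,\mathcal H^{q})$ is bounded by the sum of the Betti numbers of the closures of these strata and of their pairwise intersections, i.e.\ by the Betti numbers of a semi-algebraic set defined by $O(s+\ell)$ polynomials of degree $O(\ell d)$ in the $m+k$ variables $(\omega,x)$, which by the Milnor--Thom bound (in the sharper arrangement form of \cite{BPRbook2}) is at most $(O((s+\ell)\,\ell d))^{m+k}$. Summing over the $O(\ell^{2})$ signatures and the spectral sequence indices, and absorbing the factors $(O(sd))^{k}$ from the reductions and the complementation step --- which fit within the slack between $m+k$ and $k+2m$ in the exponent and between $O(s+\ell)$ and $O(s+\ell+m)$ in the base --- gives $b(S)\le\ell^{2}\,(O(s+\ell+m)\,\ell d)^{k+2m}$; the case $m\le\ell$ follows at once, since then $O(s+\ell+m)=O(s+\ell)$.

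The step I expect to be the main obstacle is the one I have glossed over: that the $\mathcal H^{q}$ are genuinely constructible with respect to the minor-signature stratification, and that their cohomology over the non-compact, possibly singular set $\Delta\times W$ is controlled by the Betti numbers of that arrangement. Making this rigorous needs a semi-algebraic local-triviality (Hardt) theorem for $p_1$ over each stratum, compatible with the identification of fibres by signature; a careful transfer of all statements back from $\R\la\eps\ra$ to $\R$; and a general bound for the sheaf cohomology of a semi-algebraic set in terms of the additive Mayer--Vietoris / Borel--Moore data of a compatible stratification --- this last being the real technical core, and the reason the argument is carried out cohomologically throughout. The rest is bookkeeping with degrees, numbers of polynomials, and ambient dimensions.
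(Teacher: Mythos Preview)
The paper does not prove Theorem~\ref{the:main} here; it is quoted from \cite{BP'R07jems}, and Section~\ref{sec:proof} only recalls the relevant constructions. That said, enough of the argument is reproduced to compare with yours, and your outline is essentially the Agrachev--Barvinok approach that underlies the cited proof: homogenize in $Y$, parametrize the $m$ quadrics by a point $\omega$ of an $(m-1)$-dimensional set, observe that the fibres of the projection to the $(\omega,x)$-base are spheres whose dimension depends only on the index of $\la\omega,{\mathcal Q}^h\ra(\cdot,x)$, and then stratify the base by this index using $O(\ell)$ polynomials of degree $O(\ell)$ in $\omega$ and $O(\ell d)$ in $x$, to which one applies a Milnor--Thom/arrangement bound in the $m+k$ variables $(\omega,x)$.

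The genuine difference is in how the fibrewise-sphere structure is exploited. You invoke the Leray spectral sequence of $p_1$ and then have to control $\HH^p(\Delta\times W,\mathcal H^q)$ for a constructible sheaf over a possibly singular, non-compact base; you correctly identify this as the main obstacle. The paper (following \cite{BP'R07jems}) deliberately \emph{avoids} this: instead of a spectral sequence it builds the explicit closed subset $C\subset B$ of \eqref{eqn:definition_of_C}, assembled from the non-negative eigenspaces $L_j^+(\omega,x)$ over the index strata, and shows directly (Propositions~\ref{pro:homotopy2} and~\ref{pro:homotopy1}) that $C\simeq B\simeq A^h$. This converts the sheaf-cohomology step into an ordinary Betti-number bound for a semi-algebraic set in $\R^{m+k}$, sidestepping Hardt-type local triviality and sheaf arguments entirely. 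A second minor difference: the index stratification in the paper is read off by Descartes' rule from the coefficients $C_0,\dots,C_\ell$ of the characteristic polynomial (see \eqref{eqn:defincalC} and Lemma~\ref{lem:defofD_j}), rather than by Jacobi's rule from leading principal minors; the degree bookkeeping is the same either way. Your route is sound in principle and would reproduce the bound once the constructible-sheaf cohomology estimate is made precise, but the paper's construction of $C$ is what makes the argument go through without that machinery.
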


The above theorem interpolates previously known bounds on the 
Betti numbers of general semi-algebraic sets (which are exponential
in the number of variables) 
\cite{OP,T,Milnor2,B99,GaV},
and bounds on Betti numbers of
semi-algebraic sets defined by quadratic inequalities (which are exponential
only in the number of inequalities and polynomial in the number of variables)
\cite{Bar97,Bas05-first-Kettner}.
Indeed we recover these extreme cases  by
by setting $\ell$ and $m$ 
(respectively, $s$, $d$ and $k$) to $O(1)$ in the above bound.

\subsection{Main Results}
The main result of this paper is algorithmic. We describe an
algorithm (Algorithm \ref{alg:general_betti} below) 
for computing all the Betti numbers of a closed 
semi-algebraic set defined by partly quadratic systems of polynomials.
The complexity of this algorithm interpolates the complexity of the
best known algorithms for computing the Betti numbers of general semi-algebraic
sets on one hand, and those described by quadratic 
inequalities on the other.

\begin{definition}[Complexity]
By complexity of an algorithm we will mean the number of arithmetic
operations (including comparisons) performed by the algorithm
in $\R$.
%%In case the input polynomials have integer coefficients with bounded bit-size,
%%then we will often give the bit-complexity, which is the number of bit
%%operations performed by the algorithm.
We refer the reader to \cite[Chapter 8]{BPRbook2} for a full discussion
about the various measures of complexity.
\end{definition}

We prove the following theorem.

\begin{theorem}
\label{the:algo-betti}
There exists an algorithm 
that takes as input the description of a
$({\mathcal P} \cup {\mathcal Q})$-closed semi-algebraic set
$S$ (following the same notation as in Theorem \ref{the:main})
and outputs its
Betti numbers $b_0(S),\ldots,b_{\ell+k-1}(S)$.
The complexity of this algorithm  is bounded by 
$(\ell s m d)^{2^{O(m+k)}}$.
\end{theorem}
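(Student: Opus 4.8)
The plan is to follow the template of the polynomial-time algorithm of \cite{Bas05-top} for sets defined by quadratic inequalities, but to apply it fiberwise over a cell decomposition of the $X$-space adapted to the family $\mathcal{P}$ together with the $X$-degree $d$ parts of $\mathcal{Q}$. The starting point is the same homotopy-theoretic device used in the proof of Theorem \ref{the:main} in \cite{BP'R07jems}: for a fixed value $x \in \R^k$ of the $X$-variables, the fiber $S_x \subset \R^\ell$ is a semi-algebraic set defined by at most $m$ quadratic inequalities in the $Y$-variables (with coefficients depending on $x$), and such a set is covered by a simplicial complex whose total size and whose homology are controlled by a Mayer--Vietoris-type spectral sequence over the $2^m$ sign conditions, together with a dual complex built from the associated family of quadrics. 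The key structural input is that the homotopy type of $S_x$ is captured by a space of dimension $O(m)$ rather than $\ell$, which is precisely what makes the fiberwise data singly exponential in $m$ only.

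The main steps I would carry out are: (1) construct, using the algorithm for \cite[]{} sign-determination and parametrized quadratic-map constructions, a partition of $\R^k$ into semi-algebraic cells over each of which the coefficients of the quadratic forms $Q(\cdot,x)$ have constant signs and the combinatorial type of the associated covering is constant --- the number of such cells is $(s m d)^{O(k)}$ and the algebraic description of each has the usual singly-exponential-in-$k$ degree bounds; (2) over each cell, assemble the fiberwise double complex from \cite{Bas05-top} whose associated spectral sequence converges to $\HH_*(S_x,\Q)$, its $E_1$ page having size $\sum_{i} \binom{m}{i} \cdot (\text{poly in }\ell)$, i.e. $\ell^{2^{O(m)}}$; (3) glue these over the cell decomposition of $\R^k$ via a Mayer--Vietoris / descent spectral sequence (or, equivalently, build a single semi-algebraic triangulation of $S$ refined over the cells) to obtain a complex of total size $(\ell s m d)^{2^{O(m+k)}}$ computing $\HH_*(S,\Q)$; (4) compute the ranks of the maps in this complex by linear algebra over $\Q$, which costs a polynomial in its size and hence does not change the complexity estimate.

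The hard part will be step (3): controlling the gluing of the fiberwise complexes over a base of dimension $k$ without incurring a doubly exponential blow-up in $k$. Naively iterating the quadratic machinery along each of the $k$ directions would square the complexity $k$ times; instead one must treat the $X$-variables "all at once" using a single cylindrical-type or connected-components-preserving decomposition whose fiberwise data is already organized by the $\le m$ quadrics, so that the exponent stays $2^{O(m+k)}$ rather than $2^{O(mk)}$. Concretely this means one needs a parametrized version of the covering construction of \cite{Bas05-top} in which the simplicial model of $S_x$ varies semi-algebraically over the cells of the $\mathcal{P}\cup\{Q\text{'s, }X\text{-parts}\}$-decomposition, and a proof that the resulting Mayer--Vietoris double complex has the claimed size; this is exactly the algorithmic counterpart of the covering used to prove Theorem \ref{the:main}, so the quantitative bounds there tell us the answer must come out to $(\ell s m d)^{2^{O(m+k)}}$.

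I would also verify the two degenerate specializations as a consistency check, matching the claims made after Theorem \ref{the:main}: setting $\ell, m = O(1)$ should recover the doubly-exponential-in-$k$ bound of cylindrical algebraic decomposition (here the $2^{O(m+k)}$ exponent degenerates to $2^{O(k)}$), while setting $s, d, k = O(1)$ should recover the $\ell^{2^{O(m)}}$ bound of \cite{Bas05-top}; the interpolation between them is governed entirely by the exponent $2^{O(m+k)}$, and the output of the linear algebra in step (4) is the desired list $b_0(S), \ldots, b_{\ell+k-1}(S)$.
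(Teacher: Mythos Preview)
Your high-level strategy --- parametrize the quadratic machinery of \cite{Bas05-top} over the $X$-variables and control the result by a decomposition in $m+k$ dimensions --- is exactly the paper's strategy. But the way you organize it creates a genuine gap, and it is precisely the gap you yourself flag in step~(3).

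The problem is step~(1). You propose to partition $\R^k$ into cells over which ``the combinatorial type of the associated covering is constant.'' But the combinatorial data of the \cite{Bas05-top} construction over a point $x$ is a triangulation of the simplex $\Omega\subset\R^m$ adapted to the index function $\omega\mapsto\mathrm{index}(\langle\omega,\mathcal{Q}^h\rangle(\cdot,x))$. The locus in $\Omega$ where this index jumps depends on $x$ through the characteristic polynomial of the parametrized symmetric matrix, and its combinatorics is governed by sign conditions on polynomials in $(\omega,x)$ jointly, not in $x$ alone. There is no decomposition of $\R^k$ by itself that freezes this; you are forced to decompose $\Omega\times V$ together. (A side error: even the weaker decomposition you describe would have $(smd)^{2^{O(k)}}$ cells, not $(smd)^{O(k)}$, since you need a triangulation, not merely a sign-condition partition, for any gluing argument to go through.)

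The paper resolves this by never separating the $\Omega$ and $V$ directions. It computes a single \emph{index-invariant triangulation} $h:\Delta\to\Omega\times V$ via cylindrical decomposition in $m+k$ variables (this is the sole source of the exponent $2^{O(m+k)}$), such that over each simplex the index is constant and there is a continuous orthonormal eigenbasis for the associated quadratic form (obtained by an infinitesimal perturbation to force simple spectrum). Over each simplex $\sigma$ the fiber $\varphi_1^{-1}(h(\sigma))$ retracts to a sphere of the correct dimension, cell-decomposed by the flag coming from the eigenbasis; the delicate point --- your step~(3) --- is that on a face $\sigma\prec\tau$ the eigenbases from $\sigma$ and from $\tau$ need not agree, so the paper takes the common refinement of the induced flag decompositions of the sphere, after an infinitesimal thickening of the simplices so that at most $m+k$ flags ever need to be refined simultaneously. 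This produces a single regular cell complex $\mathcal{K}(B,V)$ of size $(s\ell m d)^{2^{O(m+k)}}$ whose homology equals that of the Agrachev space $B$, hence of $A^h$. Two further Mayer--Vietoris layers (one over subsets $I\subset[m]$ for the intersection case, one over weak sign conditions on $\mathcal{Q}$ for the general closed formula) finish the job; these are bookkeeping and do not affect the exponent.

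In short: you have the right target and you have correctly located the obstruction, but the parametrized cell-complex construction that dissolves the obstruction \emph{is} the proof, and your proposal does not supply it. Replacing your two-stage ``base then fiber'' plan with a single triangulation of $\Omega\times V$ is what makes step~(3) disappear.
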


The algorithm we describe is an adaptation of the 
algorithm in \cite{Bas05-top},
to the case where there are parameters, and the degrees with respect to these
parameters could be larger than two.
In addition, in this paper we also treat the case of general 
${\mathcal P}\cup {\mathcal Q}$-closed sets, not just basic closed ones
as was done in \cite{Bas05-top}.
We also provide more details and analyze the
complexity of the algorithm more carefully, in order to take into account the
dependence on the additional parameters.

\subsection{Significance from the point of view of 
computational complexity theory}

The problem of computing the Betti numbers of 
semi-algebraic sets in general is a PSPACE-hard problem.
We refer the reader to \cite{Bas05-top} and the references
contained therein, for a detailed discussion of these hardness results.
On the other hand, as shown in \cite{Bas05-top},
the problem of computing the Betti numbers of semi-algebraic
sets defined by a constant number of quadratic inequalities is solvable in
polynomial time. This result depends critically on the quadratic dependence
of the variables, as witnessed by the fact that
the problem of computing the Betti numbers of a 
real algebraic variety defined by
a single quartic equation is also PSPACE-hard. 
We show in this paper that the problem of computing the
Betti numbers of semi-algebraic sets defined by a constant number of
polynomial inequalities is solvable in polynomial time, even if we allow
a small (constant sized) subset of the variables to occur with degrees
larger than two in the polynomials defining the given set. 
Note that such a result is not obtainable directly from the results 
in \cite{Bas05-top} by the naive method of 
replacing the monomials having degrees larger than two by a larger set
of quadratic ones (introducing new variables and equations in the process).

The rest of the paper is organized as follows. In Section \ref{sec:proof}
we describe some mathematical results concerning the topology of sets
defined by quadratic inequalities. We often omit proofs if these 
appear elsewhere and just provide appropriate pointers to literature.
In Section \ref{sec:algo_betti} we describe our algorithm for computing
all the Betti numbers of semi-algebraic sets defined by partly quadratic
systems of polynomials and prove its correctness and complexity bounds,
thus proving Theorem \ref{the:algo-betti}.
 
\section{Topology of sets defined by partly quadratic systems of polynomials}
\label{sec:proof}
In this section we recall a construction described in \cite{BP'R07jems}
that will be important for the algorithm described later.
We parametrize a construction introduced by 
Agrachev in \cite{Agrachev} while studying the topology of sets defined by 
(purely) quadratic inequalities (that is without the parameters 
$X_1,\ldots,X_k$ in  our notation). 
However, 
we do not make any non-degeneracy assumptions
on our polynomials, and we also
avoid construction of Leray spectral sequences as done in 
\cite{Agrachev}.

We first need to fix some notation.

\subsection{Mathematical Preliminaries}
\subsubsection{Some Notation}
For all $a \in R$ we define 
\begin{eqnarray*}
\s(a) &=& 0  \mbox{ if } a = 0, \\
      &=& 1  \mbox{ if } a > 0, \\
      &=& -1 \mbox{ if } a < 0.
\end{eqnarray*}
Let ${\mathcal A}$ be a finite subset of  $\R[X_1,\ldots,X_k]$.
A  {\em sign condition}  on
${\mathcal A}$ is an element of $\{0,1,- 1\}^{\mathcal A}$.
The {\em realization of the sign condition}
$\sigma$, $\RR(\sigma,\R^k)$, is the basic semi-algebraic set
$$
        \{x\in \R^k\;\mid\; \bigwedge_{P\in{\mathcal A}} 
\s({P}(x))=\sigma(P) \}.
$$

A  {\em weak sign condition}  on
${\mathcal A}$ is an element of $\{\{0\},\{0,1\},\{0,-1\}\}^{\mathcal A}$.
The {\em realization of the weak sign condition}
$\rho$, $\RR(\rho,\R^k)$, is the basic semi-algebraic set
$$
        \{x\in \R^k\;\mid\; \bigwedge_{P\in{\mathcal A}} 
\s({P}(x)) \in \rho(P) \}.
$$

We
often abbreviate $\RR(\sigma,\R^k)$ by $\RR(\sigma)$, and we 
denote by ${\rm Sign}({\mathcal A})$ the set
of realizable sign conditions
${\rm Sign}({\mathcal A})=\{\sigma \in \{0,1,- 1\}^{\mathcal A} \;\mid\; \RR(\sigma) \neq \emptyset\}$.

More generally, for any ${\mathcal A} \subset \R[X_1,\ldots,X_k]$ and
a ${\mathcal A}$-formula $\Phi$, we 
denote by 
$\RR(\Phi,\R^k)$, or simply $\RR(\Phi)$,  the
semi-algebraic set defined by $\Phi$ in $\R^k$.

\subsubsection{Use of Infinitesimals}
Later in the paper,
we
extend the ground field $\R$ by infinitesimal
elements.
We denote by $\R\langle \zeta\rangle$  the real closed field of algebraic
Puiseux series in $\zeta$ with coefficients in $\R$ (see \cite{BPRbook2} for
more details). 
The sign of a Puiseux series in $\R\langle \zeta\rangle$
agrees with the sign of the coefficient
of the lowest degree term in
$\zeta$. 
This induces a unique order on $\R\langle \zeta\rangle$ which
makes $\zeta$ infinitesimal: $\zeta$ is positive and smaller than
any positive element of $\R$.
When $a \in \R\la \zeta \ra$ is bounded 
from above and below by some elements of $\R$,
$\lim_\zeta(a)$ is the constant term of $a$, obtained by
substituting 0 for $\zeta$ in $a$.
We denote by 
$\R\langle\zeta_1,\ldots,\zeta_n\rangle$ the
field  $\R\langle \zeta_1\rangle \cdots \langle\zeta_n\rangle$
and in this case 
$\zeta_1$ is positive and infinitesimally small compared to $1$, 
and for $1 \leq i \leq n-1$, 
$\zeta_{i+1}$ is positive and infinitesimally small 
compared to $\zeta_i$, which we abbreviate by writing
$0 < \zeta_n \ll \cdots \ll \zeta_1 \ll 1$.

Let $\R'$ be a real closed field containing $\R$.
Given a semi-algebraic set
$S$ in ${\R}^k$, the {\em extension}
of $S$ to $\R'$, denoted $\E(S,\R'),$ is
the semi-algebraic subset of ${ \R'}^k$ defined by the same
quantifier free formula that defines $S$.
The set $\E(S,\R')$ is well defined (i.e. it only depends on the set
$S$ and not on the quantifier free formula chosen to describe it).
This is an easy consequence of the transfer principle (see for instance
\cite{BPRbook2}).

\subsection{Homogeneous Case}
\label{subsec:homogeneous}

\begin{notation}
\label{not:AhWh}
We denote by
\begin{itemize}
\item  ${\mathcal Q}^h$
the  family of polynomials  
obtained by homogenizing ${\mathcal Q}$ 
with respect to the variables $Y$, i.e.
\[
{\mathcal Q}^h = \{Q^h \;\mid\; Q \in {\mathcal Q}\} 
\subset  \R[Y_0,\ldots,Y_\ell,X_1,\ldots,X_k],
\]
where $Q^h=Y_0^2 Q(Y_1/Y_0,\ldots,Y_\ell/Y_0,X_1,\ldots,X_k)$,
\item $\Phi$  a formula defining a ${\mathcal P}$-closed semi-algebraic set $V$,
\item $A^h$ the semi-algebraic set
\begin{equation}
\label{eqn:defofAh}
A^h = \bigcup_{Q \in {\mathcal Q}^h}
\{ (y,x) \;\mid\; |y|=1\; \wedge\; Q(y,x) \leq 0\; \wedge \; \Phi(x)\},
\end{equation}
\item $W^h$ the semi-algebraic set 
\begin{equation}
\label{eqn:defofWh}
W^h = \bigcap_{Q \in {\mathcal Q}^h}
\{ (y,x) \;\mid\; |y|=1\; \wedge\; Q(y,x) \leq 0\; \wedge \; \Phi(x)\}.
\end{equation}
\end{itemize}
\end{notation}

Let
\begin{equation}
\label{def:omega}
\Omega = \{\omega \in \R^{m} \mid  |\omega| = 1, \omega_i \leq 0, 1 \leq i \leq m\}.
\end{equation}

Let ${\mathcal Q}=\{Q_1,\ldots, Q_m \}$
and ${\mathcal Q}^h=\{Q_1^h,\ldots, Q_m^h \}$.
For $\omega \in \Omega$ we denote by 
$\la \omega ,{\mathcal Q}^h \ra \in \R[Y_0,\ldots,Y_\ell,X_1,\ldots,X_k]$ the polynomial
defined by 
\begin{equation}
\label{def:omegaq}
\la \omega , {\mathcal Q}^h \ra = \sum_{i=1}^{m} \omega_i Q_i^h.
\end{equation}

For $(\omega,x) \in \Omega \times V$, we 
denote by
$\la \omega , {\mathcal Q}^h \ra (\cdot,x)$ the quadratic form in $Y_0,\ldots,Y_\ell$ 
obtained from $\la \omega , {\mathcal Q}^h \ra$ by specializing $X_i = x_i, 1 \leq i \leq k$.

Let $B \subset \Omega \times \Sphere^{\ell} \times V$ 
be the semi-algebraic set defined by
\begin{equation}
\label{def:B}
B = \{ (\omega,y,x)\mid \omega \in \Omega, y\in \Sphere^{\ell}, x \in 
V,  \; \la \omega, {\mathcal Q}^h \ra (y,x) \geq 0\}.
\end{equation}

We denote by $\varphi_1: B \rightarrow F$ and 
$\varphi_2: B \rightarrow \Sphere^{\ell} \times V$ the two projection maps
(see diagram below).

\[
\begin{diagram}
\node{}
\node{B} \arrow{sw,t}{\varphi_{1}}\arrow[2]{s}\arrow{se,t}{\varphi_{2}} \\
\node{F = \Omega \times V} \arrow{se} \node{} \node{\Sphere^{\ell} \times V} \arrow{sw} \\
\node{}\node{V}
\end{diagram}
\]

The following key proposition was proved by Agrachev \cite{Agrachev}
in the unparametrized
situation, but as we see below it works in the parametrized case as well.

\begin{proposition}
\label{pro:homotopy2}
The semi-algebraic set $B$ is homotopy equivalent to
$A^h$.
\end{proposition}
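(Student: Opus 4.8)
The plan is to exhibit $A^h$ as a deformation retract of $B$ via the projection $\varphi_2$, after first identifying $A^h$ with a suitable subset of $\Sphere^\ell \times V$. The starting observation is that $|y| = 1$ with $y = (y_0,\ldots,y_\ell)$ is exactly the condition $y \in \Sphere^\ell$, so that $A^h$ is the set of $(y,x) \in \Sphere^\ell \times V$ for which $Q_i^h(y,x) \le 0$ for \emph{at least one} $i \in \{1,\ldots,m\}$. The fiber of $\varphi_2$ over a point $(y,x) \in \Sphere^\ell \times V$ is the set of $\omega \in \Omega$ satisfying $\sum_i \omega_i Q_i^h(y,x) \ge 0$; since each $\omega_i \le 0$, this linear inequality in $\omega$ is satisfied for some $\omega \in \Omega$ if and only if it is satisfied at one of the vertices $-e_i$ of $\Omega$, i.e.\ if and only if $Q_i^h(y,x) \le 0$ for some $i$. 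Hence $\varphi_2(B)$ is precisely (the copy of) $A^h$ inside $\Sphere^\ell \times V$, and over each point of the image the fiber $\{\omega \in \Omega : \langle \omega, q\rangle \ge 0\}$, where $q = (Q_1^h(y,x),\ldots,Q_m^h(y,x))$ has at least one nonpositive coordinate, is a nonempty closed subset of the sphere-orthant $\Omega$.

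The key point is that these fibers are not merely nonempty but \emph{contractible}, and in a way that varies continuously with $(y,x)$, so that $\varphi_2$ is a homotopy equivalence. Concretely, for a fixed nonzero vector $q \in \R^m_{\le 0} \setminus \{0\}$... more care is needed since $q$ could have some positive entries; what matters is the set $\Omega_q = \{\omega \in \Omega : \langle \omega, q \rangle \ge 0\}$. I claim $\Omega_q$ is geodesically convex in $\Sphere^{m-1}$, being the intersection of the spherical orthant (itself geodesically convex) with the closed half-space $\langle \omega, q\rangle \ge 0$, and that it is nonempty exactly when some $q_i \le 0$. A nonempty geodesically convex subset of a hemisphere is contractible; moreover one can construct an explicit contraction, e.g.\ by geodesic retraction toward a canonically chosen point such as the normalized projection of $-q$ (or of some fixed $-e_{i_0}$) onto $\Omega_q$, and check that this choice depends continuously (indeed semi-algebraically) on $q$, hence on $(y,x)$. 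This gives a semi-algebraic deformation retraction of $B$ onto a section $s(A^h) \subset B$ of $\varphi_2$, and $\varphi_2 \circ s = \mathrm{id}$, so $B$ is homotopy equivalent to $A^h$.

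The main obstacle is handling the locus where the fiber $\Omega_q$ degenerates --- where $q$ transitions between having and not having a nonpositive coordinate, or where the canonical retraction point jumps --- so that the retraction is genuinely continuous on all of $B$. Agrachev's argument in \cite{Agrachev} deals with the analogous (unparametrized, and under nondegeneracy) situation; the work here is to check that no nondegeneracy hypothesis is actually needed for \emph{this} homotopy statement, since the convexity argument for $\Omega_q$ uses only the linearity in $\omega$ and the orthant constraints, both of which are untouched by degeneracies of the quadratic forms or by the presence of the parameters $x$. The cleanest route is probably to invoke a parametrized version of the standard fact that a semi-algebraic map whose fibers are all nonempty, compact, and contractible (in a locally trivial enough way) is a homotopy equivalence --- for instance via the semi-algebraic Vietoris--Begle type theorem or by exhibiting the explicit fiberwise geodesic contraction above. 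I expect verifying continuity of that explicit contraction across the degenerate locus to be the one place requiring genuine care rather than routine bookkeeping.
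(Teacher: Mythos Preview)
Your approach is correct and is precisely the Agrachev-style argument that the paper has in mind: the paper's ``proof'' is simply a pointer to \cite{BP'R07jems}, where the argument proceeds exactly as you outline --- identify $\varphi_2(B)$ with $A^h$, observe that each fiber $\Omega_q=\{\omega\in\Omega:\langle\omega,q\rangle\ge 0\}$ is a nonempty geodesically convex (hence contractible) spherical cell, and conclude via a Vietoris--Begle/Smale type theorem. Your worry about continuity across the degenerate locus is handled most cleanly by that last route (which the paper itself invokes elsewhere, cf.\ the proof of Proposition~\ref{pro:iso2}), rather than by building an explicit global section; no nondegeneracy hypothesis is needed.
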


\begin{proof}
See \cite{BP'R07jems}.
\end{proof}

We will use the following notation.
\begin{notation}
For a  quadratic form $Q \in \R[Y_0,\ldots,Y_\ell]$, 
we
denote by ${\rm index}(Q)$ the number of
negative eigenvalues of the symmetric matrix of the corresponding bilinear
form, i.e. of the matrix $M$ such that
$Q(y) = \langle M y, y \rangle$ for all $y \in \R^{\ell+1}$ 
(here $\langle\cdot,\cdot\rangle$ denotes the usual inner product). 
We also
denote by $\lambda_i(Q), 0 \leq i \leq \ell$ the eigenvalues of $Q$ 
in non-decreasing order, i.e.
\[ \lambda_0(Q) \leq \lambda_1(Q) \leq \cdots \leq \lambda_\ell(Q).
\]
\end{notation}

For $F=\Omega \times V$ as above we denote
\[
F_j = \{(\omega,x) \in F\;  
\mid \;  {\rm index}(\la \omega , {\mathcal Q}^h \ra (\cdot,x)) \leq j \}.
\]

It is clear that each 
$F_j$ is a closed semi-algebraic subset of 
$F$ and 
we get
a filtration of the space
$F$ 
given by
\[
F_0 \subset F_1 \subset \cdots \subset 
F_{\ell+1} = F.
\]

\begin{lemma}
\label{lem:sphere}
The fibre of the map $\varphi_1$ over a point 
$(\omega,x)\in F_{j}\setminus F_{j-1}$ 
has the homotopy type of a sphere of dimension $\ell-j$. 
\end{lemma}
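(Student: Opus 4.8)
The plan is to analyze the fibre $\varphi_1^{-1}(\omega,x)$ directly. By definition,
\[
\varphi_1^{-1}(\omega,x) = \{ y \in \Sphere^{\ell} \mid \la \omega, {\mathcal Q}^h\ra(y,x) \geq 0\},
\]
which is a subset of the unit sphere in $\R^{\ell+1}$ cut out by a single quadratic inequality $\langle M y, y\rangle \geq 0$, where $M = M(\omega,x)$ is the symmetric matrix of the quadratic form $\la \omega,{\mathcal Q}^h\ra(\cdot,x)$ in the variables $Y_0,\ldots,Y_\ell$. So the whole statement is really a fact about the topology of the locus $\{\langle M y, y\rangle \geq 0\} \cap \Sphere^{\ell}$ as a function of the number $j = {\rm index}(M)$ of negative eigenvalues of $M$.

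First I would diagonalize. Since $M$ is real symmetric, choose an orthonormal eigenbasis; this is a linear isometry of $\R^{\ell+1}$, hence a homeomorphism of $\Sphere^\ell$ carrying the set in question to
\[
\Big\{ y \in \Sphere^{\ell} \;\Big|\; \sum_{i=0}^{\ell} \lambda_i y_i^2 \geq 0 \Big\},
\]
where the $\lambda_i$ are the eigenvalues. On $F_j \setminus F_{j-1}$ we have ${\rm index}(M) = j$ exactly, so precisely $j$ of the eigenvalues are strictly negative and the remaining $\ell+1-j$ are $\geq 0$. Relabel so that $\lambda_0,\ldots,\lambda_{j-1} < 0$ and $\lambda_j,\ldots,\lambda_\ell \geq 0$. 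Write $y = (y', y'')$ with $y' = (y_0,\ldots,y_{j-1}) \in \R^{j}$ the ``negative'' block and $y'' = (y_j,\ldots,y_\ell) \in \R^{\ell+1-j}$ the ``nonnegative'' block. The inequality becomes $\sum_{i \geq j}\lambda_i y_i^2 \geq \sum_{i<j} |\lambda_i| y_i^2$. The next step is to exhibit a deformation retraction of this region onto the subsphere $\{y' = 0\} \cap \Sphere^\ell$, which is precisely $\Sphere^{\ell-j}$. The retraction is the natural one: push $y'$ radially toward $0$ while rescaling $y''$ to stay on the sphere, i.e. $H_t(y',y'') = \big((1-t)y',\, c_t(y)\,y''\big)$ normalized, with $c_t$ chosen so the image stays on $\Sphere^\ell$; one must check $y'' \neq 0$ throughout (true since $y''=0$ would force $\sum_{i<j}|\lambda_i|y_i^2 \leq 0$, hence $y'=0$ as well, contradicting $y \in \Sphere^\ell$) and that the quadratic inequality is preserved along the homotopy (the left side is nondecreasing and the right side nonincreasing under shrinking $y'$, after the rescaling is accounted for).

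I expect the main obstacle to be the careful bookkeeping of the rescaling factor $c_t$ in the retraction so that (a) the homotopy lands on the sphere at every time, (b) it preserves the inequality $\langle My,y\rangle\geq 0$, and (c) it is continuous, including on the boundary locus where $\langle My,y\rangle = 0$ and on the ``equatorial'' set where $y''$ is small. A clean way to sidestep some of this is to first deformation retract onto the locus where the inequality is an \emph{equality} only near the boundary, or better, to use the standard trick: the set $\{\sum_{i\geq j}\lambda_i y_i^2 \geq \sum_{i<j}|\lambda_i|y_i^2\}\cap\Sphere^\ell$ is homeomorphic to the mapping object of a trivial sphere bundle, and one argues it is a fibre bundle over $\Sphere^{\ell-j}$ (projecting $y\mapsto y''/|y''|$) with fibre a closed disk $D^{j}$, hence homotopy equivalent to the base $\Sphere^{\ell-j}$. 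Either route works; I would present the explicit retraction since it is elementary and self-contained, relegating the continuity check at the boundary to a short verification. Note the degenerate extremes are consistent: when $j=0$ (all $\lambda_i\geq 0$, not all zero) the set is all of $\Sphere^\ell = \Sphere^{\ell-0}$, and when $j=\ell+1$ the set is empty, matching the convention that $F_{\ell+1}\setminus F_\ell$ contributes nothing and $\Sphere^{-1}=\emptyset$.
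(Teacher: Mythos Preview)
Your argument is correct. The paper itself does not prove this lemma but simply refers to \cite{BP'R07jems}; your diagonalization-and-retraction approach is the standard one and is exactly the mechanism underlying the paper's subsequent constructions: the subsphere $\{y'=0\}\cap\Sphere^\ell$ onto which you retract is precisely $L_j^+(\omega,x)\cap\Sphere^\ell$ as used in the definition of $C_j$ (see \eqref{eqn:definition_of_C_j}). One minor remark: your parenthetical ``not all zero'' in the $j=0$ case is unnecessary, since even if $M=0$ the inequality $\langle My,y\rangle\ge 0$ holds trivially and the fibre is still all of $\Sphere^\ell$.
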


\begin{proof}
See \cite{BP'R07jems}.
\end{proof}

For each 
$(\omega,x) \in F_j \setminus F_{j-1}$, let 
$L_j^+(\omega,x) \subset \R^{\ell+1}$ denote the sum of the
non-negative eigenspaces of 
$\la \omega , {\mathcal Q}^h \ra (\cdot,x)$.
Since  ${\rm index}(\la \omega, {\mathcal Q}^h \ra (\cdot,x)) = j$ stays invariant as
$(\omega,x)$ varies over $F_j
\setminus F_{j-1}$,
$L_j^+(\omega,x)$ varies continuously with $(\omega,x)$.

We denote by $C$ the semi-algebraic set defined by the following.
We first define for $0 \leq j \leq \ell+1$
\begin{equation}
\label{eqn:definition_of_C_j}
C_j = \{(\omega,y,x) \;\mid\; (\omega,x) \in 
      F_{j}\setminus F_{j-1}, 
y \in L_j^+(\omega,x), |y| = 1\},
\end{equation}
and finally we define
\begin{equation}
\label{eqn:definition_of_C}
C = \bigcup_{j=0}^{\ell+1} C_j.
\end{equation}

The following proposition proved in 
\cite{BP'R07jems}
relates the homotopy type of $B$ to that
of $C$.

\begin{proposition}
\label{pro:homotopy1}
The semi-algebraic set $C$ defined by 
\eqref{eqn:definition_of_C} is homotopy equivalent to $B$.
\end{proposition}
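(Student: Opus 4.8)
The plan is to construct an explicit deformation retraction from $B$ onto $C$, fibrewise over $F$. Recall that $C \subset B$: indeed, if $(\omega, y, x) \in C_j$, then $(\omega, x) \in F_j \setminus F_{j-1}$ and $y$ lies in the sum $L_j^+(\omega,x)$ of the non-negative eigenspaces of the quadratic form $q = \la \omega, {\mathcal Q}^h \ra(\cdot, x)$, so $q(y,x) = \langle M y, y \rangle \geq 0$ where $M$ is the symmetric matrix of $q$; hence $(\omega, y, x) \in B$. So the inclusion $C \hookrightarrow B$ makes sense, and it suffices to produce a retraction of $B$ onto $C$ that is a homotopy equivalence. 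Both $\varphi_1|_B : B \to F$ and $\varphi_1|_C : C \to F$ have the same base $F$, and the strategy is to define the retraction over each stratum $F_j \setminus F_{j-1}$ and then argue that these patch together continuously across the whole of $F$.

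The key step is the fibrewise construction. Fix $(\omega, x) \in F_j \setminus F_{j-1}$, and let $q = \la \omega, {\mathcal Q}^h \ra(\cdot, x)$ with symmetric matrix $M = M(\omega,x)$, which varies continuously with $(\omega,x)$ on the stratum. Decompose $\R^{\ell+1} = L_j^+(\omega,x) \oplus L_j^-(\omega,x)$ into the sum of the non-negative and the (strictly) negative eigenspaces of $M$; since $\mathrm{index}(q) = j$ is constant on the stratum, both summands have constant dimension there and vary continuously. The fibre of $\varphi_1|_B$ over $(\omega,x)$ is $\{ y \in \Sphere^\ell \mid \langle M y, y\rangle \geq 0\}$, while the fibre of $\varphi_1|_C$ over $(\omega,x)$ is the unit sphere $\Sphere(L_j^+(\omega,x))$ of dimension $\ell - j$. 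For $y \in \Sphere^\ell$ write $y = y^+ + y^-$ with $y^\pm$ the orthogonal projections onto $L_j^\pm$. On the fibre of $B$ one has $\langle M y^+, y^+\rangle \geq 0 \geq -\langle M y^-, y^-\rangle$, and because $M$ is negative definite on $L_j^-$, the condition $\langle M y, y\rangle \geq 0$ forces $y^+ \neq 0$ (it cannot hold with $y^+ = 0$ unless $y^- = 0$ too, impossible on the sphere). Hence $y \mapsto y^+/|y^+|$ is well-defined and continuous on the $B$-fibre, maps it onto $\Sphere(L_j^+)$, and is the identity on $\Sphere(L_j^+)$; the straight-line homotopy $(t, y) \mapsto \big((1-t)y + t\,y^+/|y^+|\big)\big/\big|(1-t)y + t\,y^+/|y^+|\big|$ stays inside the $B$-fibre (the set $\{\langle M\cdot,\cdot\rangle \geq 0\} \cap \Sphere^\ell$ is "star-shaped toward $L_j^+$" in this sense, since convex combinations only increase the $L_j^+$-component) and deformation-retracts it onto $\Sphere(L_j^+)$. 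This gives the fibrewise retraction on each stratum.

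The main obstacle, and the point requiring the most care, is continuity of the retraction across the strata of the filtration $F_0 \subset F_1 \subset \cdots \subset F_{\ell+1} = F$: the dimension of $L_j^+(\omega,x)$ jumps as $(\omega, x)$ crosses from one stratum to the next, and the projections $y \mapsto y^\pm$ are not individually continuous on $F$. To handle this I would replace the naive orthogonal projection onto $L_j^+$ by a spectral construction that is manifestly continuous on all of $F$: using that the eigenvalues $\lambda_0(q) \leq \cdots \leq \lambda_\ell(q)$ vary continuously, one builds a continuous map on $B$ that scales the component of $y$ in each eigenspace by a factor depending continuously on the corresponding eigenvalue (sending strictly negative eigenvalues' contribution to zero in the limit), so that the target is $\Sphere(L_j^+)$ on the stratum $F_j \setminus F_{j-1}$ and everything glues. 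This is exactly the parametrized analogue of Agrachev's argument, and since the quoted results (Propositions \ref{pro:homotopy2}, \ref{pro:homotopy1} as stated in \cite{Agrachev} for the unparametrized case, and Lemma \ref{lem:sphere}) already encapsulate the fibrewise homotopy type, I would conclude by invoking the fact that a fibrewise deformation retraction built over a finite semi-algebraic filtration, continuous on the total space, is a homotopy equivalence — the detailed verification of this gluing being deferred to \cite{BP'R07jems} as is done for the companion statements in this section.
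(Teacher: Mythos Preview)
Your proposal is correct and follows the same approach as the argument referenced by the paper: the paper gives no proof here beyond the pointer to \cite{BP'R07jems}, and the construction there is precisely the Agrachev-style fibrewise deformation retraction of $B$ onto $C$ that you outline --- project each fibre $\varphi_1^{-1}(\omega,x)$ onto the unit sphere of $L_j^+(\omega,x)$ along the negative eigenspaces, with the continuity across strata handled by a spectral deformation depending continuously on the eigenvalues. Your sketch in fact supplies more detail than the paper itself does at this point.
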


The following example which also appears in \cite{BP'R07jems} 
illustrates Proposition \ref{pro:homotopy1}.

\begin{example}
In this example $m=2,\ell = 3,k=0$, and 
${\mathcal Q}^h=\{Q_1^h,Q_2^h\}$ with
\begin{align*}
Q_1^h =& - Y_0^2 - Y_1^2 - Y_2^2, \\
Q_2^h =&   Y_0^2 + 2 Y_1^2  + 3 Y_2^2.
\end{align*}

The set $\Omega$ is the part of the unit circle in the third quadrant of the
plane, 
and  $F = \Omega$ in this case (since $k=0$).
In the following Figure \ref{fig:illus}, we display
the fibers of the map $\varphi_1^{-1}(\omega) \subset B$ for a sequence of 
values of $\omega$ starting from $(-1,0)$ and ending at 
$(0,-1)$. We also show the spheres,
$C \cap \varphi_1^{-1}(\omega)$, of dimensions $0,1$, and $2$, that these fibers
retract to. At $\omega = (-1,0)$, it is easy to verify that
${\rm index}(\la \omega, {\mathcal Q}^h \ra) = 3$, and the 
fiber $\varphi_1^{-1}(\omega) \subset B$
is empty. Starting from
$\omega = (-\cos(\arctan(1)),-\sin(\arctan(1)))$ we have 
${\rm index}(\la \omega ,{\mathcal Q}^h\ra) = 2$,
and the fiber 
$\varphi_1^{-1}(\omega)$ consists of the union of two spherical caps, 
homotopy equivalent to $\Sphere^0$.
Starting from
$\omega = (-\cos(\arctan(1/2)),-\sin(\arctan(1/2)))$ we have
${\rm index}(\la \omega,  {\mathcal Q}^h\ra) = 1$, and 
the fiber $\varphi_1^{-1}(\omega)$ is homotopy equivalent to $\Sphere^1$. Finally,
starting from
$\omega = (-\cos(\arctan(1/3)),-\sin(\arctan(1/3)))$,
${\rm index}(\la \omega , {\mathcal Q}^h\ra) = 0$, and 
the fiber $\varphi_1^{-1}(\omega)$ stays equal to to $\Sphere^2$.

\begin{figure}[hbt]
\scalebox{0.8}{
\begin{picture}(405,70)
\includegraphics{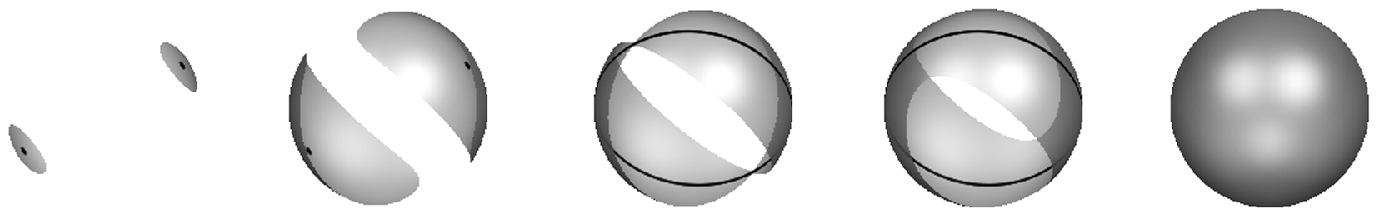}%
\end{picture}
}
\caption{Type change: $\emptyset\to \Sphere^0\to \Sphere^1\to \Sphere^2$. 
$\emptyset$ is not shown. }
\label{fig:illus}
\end{figure}
\end{example}

Let
$\Lambda \in \R[Z_1,\ldots,Z_m,X_1,\ldots,X_k,T]$ be the
polynomial defined by

\begin{eqnarray*}
\Lambda  &=& \det( T \cdot {{\rm Id}}_{\ell+1}-M_{Z \cdot {\mathcal Q}^h}),\\
   &=&  T^{\ell+1} + C_{\ell} T^\ell + \cdots + C_0,
\end{eqnarray*}
where $Z \cdot {\mathcal Q}^h = \sum_{i=1}^m Z_i Q_i^h$, and 
each $C_i \in \R[Z_1,\ldots,Z_m,X_1,\ldots,X_k]$.

Note that for $(\omega,x) \in \Omega \times\R^k$, the polynomial
$\Lambda(\omega,x,T)$, 
being the characteristic polynomial of a real symmetric
matrix, has all its roots real. 
It then follows from Descartes' rule of signs 
(see for instance \cite{BPRbook2}),
that for each $(\omega,x) \in \Omega \times \R^k$, 
${\rm index}(\la \omega ,{\mathcal Q}^h\ra (\cdot,x))$ is determined by the sign vector
\[
({\rm sign}(C_\ell(\omega,x)),\ldots,{\rm sign}(C_0(\omega,x))).
\] 
More precisely, the number of  
sign variations in the sequence
\[
\s(C_0(\omega,x)),\ldots,(-1)^i \s(C_i(\omega,x)),\ldots, (-1)^\ell \s(C_\ell(\omega,x)),+1
\]
is equal to ${\rm index}(\la \omega, {\mathcal Q}^h \ra (\cdot,x))$.

Hence, denoting 
\begin{equation}
\label{eqn:defincalC}
{\mathcal C} = 
\{C_0,\ldots,C_{\ell}\} \subset \R[Z_1,\ldots,Z_m,X_1,\ldots,X_k],
\end{equation}
we have 

\begin{lemma}
\label{lem:defofD_j}
$F_j$ is the intersection of 
$F$ with a ${\mathcal C}$-closed semi-algebraic set 
$D_j \subset \R^{m+k}$,
for each 
$0 \leq j \leq \ell+1$. \qed
\end{lemma}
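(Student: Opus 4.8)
The plan is to use the characterization of $\mathrm{index}$ via Descartes' rule of signs that was just established, and to observe that this characterization expresses membership in $F_j$ purely in terms of the signs of the polynomials $C_0,\ldots,C_\ell$ evaluated at $(\omega,x)$. First I would recall that for $(\omega,x)\in\Omega\times\R^k$ the polynomial $\Lambda(\omega,x,T)$ has only real roots, so the number of sign variations $v(\omega,x)$ in the sequence $\s(C_0(\omega,x)),-\s(C_1(\omega,x)),\ldots,(-1)^\ell\s(C_\ell(\omega,x)),+1$ equals $\mathrm{index}(\la\omega,{\mathcal Q}^h\ra(\cdot,x))$, as stated in the discussion preceding the lemma. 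The key point is that $v(\omega,x)$ depends only on the sign vector $(\s(C_0(\omega,x)),\ldots,\s(C_\ell(\omega,x)))\in\{0,1,-1\}^{{\mathcal C}}$.

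Next I would make this explicit. For each sign condition $\sigma\in\{0,1,-1\}^{{\mathcal C}}$, the associated number of sign variations $v(\sigma)$ (computed from the sequence $\sigma(C_0),-\sigma(C_1),\ldots,(-1)^\ell\sigma(C_\ell),+1$ with the usual convention of ignoring zeros) is a well-defined integer. Define
\[
D_j=\bigcup_{\substack{\sigma\in\{0,1,-1\}^{{\mathcal C}}\\ v(\sigma)\le j}}\RR(\sigma,\R^{m+k}).
\]
Since each $\RR(\sigma)$ is the realization of a sign condition on ${\mathcal C}$, the finite union $D_j$ is a ${\mathcal C}$-semi-algebraic set; and because the condition $v(\sigma)\le j$ is a union of equalities and inequalities on the $\s(C_i)$, one can in fact write $D_j$ by a ${\mathcal C}$-closed formula — the condition ``$\mathrm{index}\le j$'' is topologically closed, and more concretely the set of $(c_0,\ldots,c_\ell,+1)$ with at most $j$ sign variations is closed, being describable by a Boolean combination (without negations) of the closed conditions $C_i\ge 0$ and $C_i\le 0$. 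I would spell out this last reformulation just enough to justify the word ``closed'' in the statement.

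Finally I would verify $F_j=F\cap D_j$. For $(\omega,x)\in F=\Omega\times V$ we have $(\omega,x)\in F_j$ iff $\mathrm{index}(\la\omega,{\mathcal Q}^h\ra(\cdot,x))\le j$ iff $v(\omega,x)\le j$ iff the sign vector $(\s(C_0(\omega,x)),\ldots,\s(C_\ell(\omega,x)))$ satisfies $v(\sigma)\le j$ iff $(\omega,x)\in D_j$; combined with $(\omega,x)\in F$ this gives the claim. I expect no real obstacle here: the only mild subtlety is checking that the ``at most $j$ sign variations'' locus is genuinely a \emph{closed} ${\mathcal C}$-semi-algebraic set rather than merely semi-algebraic, which follows either from the closedness of $F_j$ in $F$ (already noted in the text) together with the fact that $F_j$ is a union of sign-condition realizations, or directly from the combinatorial description of sign-variation-count as a negation-free Boolean combination of weak sign conditions.
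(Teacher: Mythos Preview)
Your proposal is correct and follows exactly the approach the paper intends: the paper gives no proof at all (the lemma is stated with a bare \qedsymbol), relying entirely on the preceding paragraph about Descartes' rule of signs to make the claim evident. Your write-up simply spells out what the paper leaves implicit, and your handling of the one genuine subtlety---that ``at most $j$ sign variations'' defines a \emph{closed} (not merely semi-algebraic) subset, which follows from lower semicontinuity of the sign-variation count---is appropriate and more careful than the paper itself.
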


\section{Computing the Betti numbers}
\label{sec:algo_betti}
We now consider the algorithmic problem of computing all the Betti
numbers of a semi-algebraic set defined by a partly quadratic 
system of polynomials. 
\subsection{Summary of the main idea}
The main idea behind the algorithm can be summarized as follows. 

By virtue of Proposition
\ref{pro:homotopy2}, in order to compute the Betti numbers of $A^h$, 
it suffices to construct a cell complex,
${\mathcal K}(B,V)$, whose associated space is homotopy equivalent
to the set $B$ defined by  \eqref{def:B}. 
In order to do so, we first compute a semi-algebraic triangulation,
$h: \Delta \rightarrow F$, 
such that 
as  $(\omega,x)$ varies over the image of any simplex $\sigma \in \Delta$,
the ${\rm index}(\la \omega , {\mathcal Q}^h \ra(\cdot,x))$ 
stays fixed,
and we have a continuous choice of an orthonormal basis,
\[
\{e_0(\sigma,\omega,x),\ldots,e_\ell(\sigma,\omega,x)\}
\]
consisting of eigenvectors of the symmetric matrix associated to the 
quadratic form $\la \omega ,{\mathcal Q}^h \ra(\cdot,x)$.

Moreover, if ${\rm index}(\la \omega ,{\mathcal Q}^h\ra (\cdot,x)) = j$ for 
$(\omega,x) \in h(\sigma)$, then $\varphi_1^{-1}(\omega,x)$ can be retracted to
$\Sphere^{\ell} \cap \spanof(e_j(\sigma,\omega,x),\ldots,
e_\ell(\sigma,\omega,x))$, and
the flag of subspaces defined by 
the orthonormal basis,
$\{e_0(\sigma,\omega,x),\ldots,e_\ell(\sigma,\omega,x)\}$, gives an efficient
regular cell decomposition of the sphere
$\Sphere^{\ell} \cap \spanof(e_j(\sigma,\omega,x),\ldots,
e_\ell(\sigma,\omega,x))$
into $2(\ell - j+1)$ cells, having two cells of each dimension from 
$0$ to $\ell - j$ (see Definition \ref{def:cell}). 

Now consider a pair of simplices, $\sigma,\tau \in \Delta$, with
$\sigma \prec \tau$.
The orthonormal basis 
$\{e_0(\tau,\omega,x),\ldots,e_\ell(\tau,\omega,x)\}$, 
defined for $(\omega,x) \in h(\tau)$ might not have a continuous extension
to $h(\sigma)$ on the boundary of $h(\tau)$. In particular, 
the cell decompositions of the fibers,
$\Sphere^{\ell} \cap \spanof(e_j(\sigma,\omega,x),\ldots,
e_\ell(\sigma,\omega,x))$,
over points in $(\omega,x) \in h(\sigma)$ might not be compatible with those 
over neighboring points in $h(\tau)$. In order to obtain a proper cell
complex we need to compute a common refinement of the cell decomposition
of the sphere over each point in $(\omega,x) \in h(\sigma)$ induced by
the basis $\{e_0(\sigma,\omega,x),\ldots,e_\ell(\sigma,\omega,x)\}$,
and the one obtained as a limit of those over certain points in $h(\tau)$ 
converging to $(\omega,x)$. We need to further subdivide 
$h(\sigma)$ to ensure that over each cell of this sub-division the 
combinatorial
type of the above refinements stays the same. 
Since a simplex $\sigma \in \Delta$ can be incident on many other
simplices of $\Delta$, we might in the above procedure 
need to simultaneously refine cell 
decompositions of the sphere coming from many different simplices.
In order to ensure (for complexity reasons) that we do not have to 
simultaneously refine cell decompositions coming from too many simplices, 
we thicken the simplices infinitesimally and as a result only need to
refine at most $m+k$ cell decompositions at a time.      

Before describing the construction of ${\mathcal K}(B,V)$ in more detail,
we need some preliminaries on triangulations.

\subsection{Triangulations}
We first need to recall a fact from semi-algebraic geometry about triangulations
of semi-algebraic sets, and then we define the notion of an Index Invariant Triangulation
and give an algorithm 
for
computing it.

\subsubsection{Triangulations of semi-algebraic sets}
A triangulation
of a closed and bound\-ed semi-algebraic set $S$ is a simplicial complex 
$\Delta$ together with a
semi-algebraic homeomorphism from $\vert \Delta\vert$ to $S$. 
We always assume that the simplices in $\Delta$ are open.
Given such a triangulation we will often identify the simplices in
$\Delta$ with their images in $S$ under the given homeomorphism, and
will  refer to the triangulation by $\Delta$.

Given a triangulation $\Delta$, the cohomology groups 
$\HH^i(S)$ are isomorphic to the simplicial cohomology groups 
$\HH^i(\Delta)$ of the simplicial complex  $\Delta$ and
are in fact independent of the triangulation $\Delta$ 
(this fact is classical over $\re$;  see for instance \cite{BPRbook2} 
for a self-contained proof in the category of semi-algebraic sets).
 
We call a triangulation $h_1: |\Delta_1| \rightarrow S$
of a semi-algebraic set $S$, to be a {\em refinement}
of a triangulation
$h_2: |\Delta_2| \rightarrow S$ if
for every simplex $\sigma_1 \in \Delta_1$, there exists a simplex
$\sigma_2 \in \Delta_2$ such that $h_1(\sigma_1) \subset h_2(\sigma_2).$

Let $S_1 \subset S_2$ be two compact semi-algebraic subsets of
$\R^k$. We say that a semi-algebraic 
triangulation $h: |\Delta| \rightarrow S_2$ of $S_2$, respects
$S_1$ if for every simplex $\sigma  \in \Delta$,
$h(\sigma) \cap S_1 = h(\sigma)$ or $\emptyset$.
In this case, $h^{-1}(S_1)$ is identified with a sub-complex
of $\Delta$ and
$h|_{h^{-1}(S_1)} :h^{-1}(S_1) \rightarrow S_1$ is a semi-algebraic 
triangulation  of $S_1$. We will refer to this sub-complex  by 
$\Delta|_{S_1}$.

We will need the following theorem which can be deduced 
from Section 9.2 in  \cite{BCR} (see also \cite{BPRbook2}).

\begin{theorem}
\label{the:triangulation}
Let $S_1 \subset S_2 \subset\R^k$ 
be closed and bounded semi-algebraic sets, and
let $h_i: \Delta_i \rightarrow S_i, i = 1,2$ be 
semi-algebraic triangulations of
$S_1,S_2$. Then there exists a semi-algebraic triangulation
$h: \Delta \rightarrow S_2$ of $S_2$, such that
$\Delta$ respects $S_1$,
$\Delta$ is a refinement of $\Delta_2$, and
$\Delta|_{S_1}$ is a refinement of $\Delta_1$.

Moreover, there exists an
algorithm which computes such a triangulation 
with complexity bound
$(sd)^{O(1)^{k}}$, where $s$ is the number of polynomials used in the
definition of $S_1$ and $S_2$, and $d$ is a bound on their degrees. 
\end{theorem}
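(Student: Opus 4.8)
The statement is a standard "effective triangulation compatible with a given pair" result, and the plan is to bootstrap it from the known algorithms for computing a single semi-algebraic triangulation together with the cylindrical algebraic decomposition (CAD) machinery. First I would recall (from \cite{BCR}, Section 9.2, or \cite{BPRbook2}) that there is an algorithm which, given a finite family of polynomials $\mathcal{F}\subset\R[X_1,\ldots,X_k]$, computes a semi-algebraic triangulation of $\R^k$ (or of any bounded box containing the sets in question) that is compatible with every $\mathcal{F}$-semi-algebraic set, with complexity $(sd)^{O(1)^k}$ where $s=\#\mathcal{F}$ and $d$ bounds the degrees. The key point is that such a triangulation is adapted to the sign conditions of $\mathcal{F}$, hence in particular respects any finite union of sign-condition cells.

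The heart of the argument is to feed the \emph{right} family of polynomials into this algorithm. Let $\mathcal{F}_i$ be the polynomials appearing in the description of $S_i$ for $i=1,2$, and let $\mathcal{G}_i$ be the polynomials used to define the simplicial complexes $\Delta_i$ together with the triangulating homeomorphisms $h_i$ (the graphs of the $h_i$ are semi-algebraic, and one can describe the cells $h_i(\sigma)$ and their defining equalities/inequalities by a bounded family of polynomials whose number and degree are controlled by the size of $\Delta_i$, which in turn is $(sd)^{O(1)^k}$). I would then run the triangulation algorithm on $\mathcal{F}_1\cup\mathcal{F}_2\cup\mathcal{G}_1\cup\mathcal{G}_2$, restricted to $S_2$. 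The resulting triangulation $h:\Delta\to S_2$ is by construction compatible with $S_1$ (since $S_1$ is a union of sign cells of $\mathcal{F}_1$), so $\Delta$ respects $S_1$; it is a refinement of $\Delta_2$ because each image simplex $h_2(\sigma_2)$ is a union of sign cells of $\mathcal{G}_2$ and hence each $h(\sigma)$ lands inside a single such cell; and similarly $\Delta|_{S_1}$ refines $\Delta_1$. One should check that "refinement" in the sense defined above (image containment, not simplicial subdivision) is exactly what compatibility with $\mathcal{G}_i$ delivers — that is a routine verification.

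For the complexity bound I would track the number and degrees of polynomials through each step: $\#(\mathcal{F}_1\cup\mathcal{F}_2)$ is $s$ and their degrees are $\leq d$; the auxiliary families $\mathcal{G}_i$ have size and degree bounded by $(sd)^{O(1)^k}$, which is absorbed into the final bound since the triangulation algorithm applied to $N$ polynomials of degree $D$ in $k$ variables costs $(ND)^{O(1)^k}$, and substituting $N,D=(sd)^{O(1)^k}$ still yields $(sd)^{O(1)^k}$ after adjusting the implied constant. The existence half of the theorem (no algorithm) is the purely topological statement and follows identically, or can simply be cited from \cite{BCR}.

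The main obstacle I anticipate is \emph{not} the existence statement, which is classical, but the bookkeeping needed to describe the triangulating maps $h_i$ by an explicit controlled polynomial family so that they can be incorporated into the CAD input — i.e., ensuring that "$\Delta$ is a refinement of $\Delta_2$" and "$\Delta|_{S_1}$ refines $\Delta_1$" simultaneously, rather than just "$\Delta$ respects $S_1$". This is handled by the observation that a triangulation is itself a semi-algebraic object of bounded complexity, so its cells can be treated on the same footing as the sets $S_1,S_2$; once that is granted, everything reduces to the single-family triangulation algorithm. I would therefore present the proof as: (1) reduce $h_i$ to polynomial data, (2) invoke the compatible-triangulation algorithm on the combined family, (3) verify the three compatibility/refinement conclusions, (4) total up the complexity.
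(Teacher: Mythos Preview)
The paper does not actually prove this theorem: it is stated as a result ``which can be deduced from Section 9.2 in \cite{BCR} (see also \cite{BPRbook2})'' and no argument is given beyond that citation. Your proposal is therefore not competing with a proof in the paper, but rather supplying the deduction the authors leave to the references.

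Your approach is the standard one and is essentially correct: encode the images $h_i(\sigma)$ of the simplices of the given triangulations by polynomial families $\mathcal{G}_i$, apply the compatible-triangulation/CAD algorithm to $\mathcal{F}_1\cup\mathcal{F}_2\cup\mathcal{G}_1\cup\mathcal{G}_2$, and read off the three compatibility and refinement properties from compatibility with sign conditions. This is exactly how one extracts the statement from the cited sources.

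One point worth tightening in your write-up is the complexity accounting. The theorem as stated measures complexity only in terms of $s$ and $d$, the number and degrees of the polynomials defining $S_1$ and $S_2$, and says nothing about the complexity of the \emph{input} triangulations $h_1,h_2$. Your bound on $\#\mathcal{G}_i$ and $\deg\mathcal{G}_i$ by $(sd)^{O(1)^k}$ tacitly assumes that the $\Delta_i$ themselves were produced by a CAD-type procedure of that complexity; if the $h_i$ were arbitrary semi-algebraic triangulations, the $\mathcal{G}_i$ could be arbitrarily large and the stated bound would fail. In the paper this is harmless because every invocation of the theorem is either the base case (no pre-existing $h_i$ to refine) or applies to triangulations just computed by the algorithm itself, so the assumption is satisfied. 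You should make this hypothesis explicit rather than leave it implicit.
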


\subsubsection{Parametrized eigenvector basis}
\label{subsec:parametrizedeigenvectors}
Let $M(\omega,x)$ be the symmmetric matrix associated to the quadratic form 
$\la \omega , {\mathcal Q}^h \ra(\cdot,x)$ defined by
\eqref{def:omegaq}. 
When $M(\omega,x)$  has simple eigenvalues for all possible choice of 
$\omega,x$
in some domain, 
there is a finite choice of 
orthonormal bases consisting of eigenvectors of $M(\omega,x)$.
However, when $M(\omega,x)$ has multiple eigenvalues, 
the number of choices of 
orthonormal basis of eigenvectors is infinite.
In order to avoid the problem caused by the latter situation 
we are going to use an infinitesimal deformation as follows.

Let $0 < \eps \ll 1$ be an infinitesimal and 
\begin{equation}
\label{eqn:defofM_eps}
M_\eps(\omega,x)=(1-\eps)M(\omega,x)+\eps\diag(0,1,2,\dots,\ell).
\end{equation}
Note that for every $(\omega,x)\in \Omega \times \R^{k}$
the eigenvalues of $M_\eps(\omega,x)$ in $\R\la \eps \ra$ are distinct and nonzero.
Indeed,
replace $\eps$ by $t$ in the definition of 
$M_\eps(\omega,x)$ and obtain $M_t(\omega,x)$. 
Observe that the statement is true if $t=1$, since
the matrix $M_1(\omega,x)$  has distinct eigenvalues. Thus, the  set of $t$'s 
in the algebraically closed field $\R[i]$
for which   $M_t(\omega,x)$ 
has $\ell+1$ distinct eigenvalues is non-empty,
constructible and contains a open subset, since the condition of having distinct
eigenvalues is a stable condition. Thus, there exists $\eps_0 > 0$, such that
for all $t\in (0,\eps_0)$, 
$M_t(\omega,x)$ has $\ell+1$ distinct eigenvalues, and hence
it is also the case for the infinitesimal $\eps$. 

Denote by 
$\Lambda(M_\eps(\omega,x,T))=\det(T \cdot \Id_{\ell+1}-M_\eps(\omega,x))$ 
the characteristic polynomial of  $M_\eps(\omega,x)$.
Let $\mathcal A \subset \R[Z_1,\ldots, Z_m,X_1,\ldots,X_k]$ be a set of 
polynomials containing $\mathcal C$ 
(see \eqref{eqn:defincalC})
 and such that
for every sign condition $\rho \in \{0,1,-1\}^{\mathcal A}$ and 
every $(\omega,x)\in \RR(\rho,\Omega \times \R^k)$, the Thom encodings of 
the roots of $\Lambda(M_\eps(\omega,x),T)$
stay fixed, as well as the list of the non-singular minors of size 
$\ell$ in $M_\eps(\omega,x,T)$
at each root of $\Lambda(M_\eps(\omega,x),T)$.

Then choosing a non-vanishing minor and using Cramer's rule, 
we find 
$(\ell+1)^2$
rational functions in the variables 
$u,\omega,x,T$ 
which give for every 
$(u,\omega,x)\in \RR(\rho,\Omega \times \R^{k+1})$ the coordinates of an 
eigenvector $v_\eps(u,\omega,x,t_\eps)$ associated to the eigenvalue $t_\eps$
(where $u$ denotes the co-ordinate left out in the non-singular 
$\ell \times \ell$ minor chosen for this eigenvalue in the application of
Cramer's rule).
We denote by $e_\eps(\omega,x,t_\eps)$  the unit eigenvector 
$v_\eps(1,\omega,x,t_\eps)/\Vert v_\eps(1,\omega,x,t_\eps) \Vert$
when $t_\eps$ is an eigenvalue.

If the eigenvalues $\lambda_{\eps,0} <\ldots <\lambda_{\eps,\ell}$ are in 
increasing order, we define  
\[
e_{\eps,i}(\omega,x)=e_\eps(\omega,x,\lambda_{\eps,i}).
\]
Note that for every 
$(\omega,x)\in \Omega \times \R^{k}$
\[
(\lim_\eps(e_{\eps,0}(\omega,x)), \ldots,\lim_\eps(e_{\eps,\ell}(\omega,x))) 
\]
is an orthonormal basis consisting of eigenvectors of $M(\omega,x)$.

\subsubsection{Index Invariant Triangulations}
We now define a certain special kind of 
semi-algebraic triangulation of $F$
that will play an important role in our algorithm.

\begin{definition}(Index Invariant Triangulation)
\label{def:iit}
An {\em index invariant triangulation} of $F$ is a triangulation
$$
h: \Delta  \rightarrow F
$$ of  $F$, 
which respects all the realization of the 
weak sign conditions on ${\mathcal P}$ and ${\mathcal A}$
(see 
definition in \ref{subsec:parametrizedeigenvectors}).
As a consequence, $h$ respects the subsets $F_{I}$ for every
$I \subset {\mathcal Q}$. Moreover,
${\rm index}(\la \omega , {\mathcal Q}^h \ra(\cdot,x))$, 
stays invariant as $(\omega,x)$ varies over $h(\sigma)$, and
the maps $e_{\eps,0}(\sigma),\ldots,e_{\eps,\ell}$ sending $(\omega,x)\in h(\sigma)$ to
the orthonormal basis
 $e_{\eps,0}(\omega,x),\ldots, e_{\eps,\ell}(\omega,x)$,
are uniformly defined.
Note also that for every $(\omega,x)\in h(\sigma)$,
\[
\{e_j(\sigma,\omega,x)),\ldots,e_{\ell}(\sigma,\omega,x))=\{\lim_\eps(e_{\eps,j}(\omega,x)),\ldots,\lim_\eps(e_{\eps,\ell}(\omega,x))\}
\] 
is a basis for 
the linear subspace $L^+(\omega,x) \subset \R^{\ell+1}$, 
(which is the orthogonal complement to the sum of the
eigenspaces corresponding to the
first $j$ eigenvalues of $\la \omega , {\mathcal Q}^h \ra (\cdot,x)$).
\end{definition}

We now describe  an  algorithm for computing index invariant
triangulations.

\begin{algorithm}[Index Invariant Triangulation]
\label{alg:triangulation}
\item[]
\item[{\sc Input}]
\item[]
\begin{itemize}
\item 
A family of polynomials,
$
{\mathcal Q}^h = \{Q_1^h,\ldots,Q_m^h\} 
\subset \R[Y_0,\ldots,Y_\ell,X_1,\ldots,X_k],
$
where each $Q_i^h$ is homogeneous of degree $2$ in the 
variables $Y_0,\ldots,Y_\ell$,
and of degree at most $d$ in $X_1,\ldots,X_k$,
\item another family of polynomials,
$
{\mathcal P} 
\subset \R[X_1,\ldots,X_k],
$
with $\deg(P) \leq d,  P \in {\mathcal P}, \#({\mathcal P})=s$,
\item
a ${\mathcal P}$-closed formula $\Phi$ defining a bounded
${\mathcal P}$-closed semi-algebraic set $V \subset \R^k$.
\end{itemize}

\item [{\sc Output}]: an index invariant triangulation,
$$
h: \Delta  \rightarrow F
$$ of  $F$
and for each simplex $\sigma$ of $\Delta$, the rational functions $e_{\eps,0}(\sigma),\ldots, e_{\eps,\ell}(\sigma).$
\item [{\sc Procedure}]
\item[]
\item[Step 1.]
Let $\eps > 0$ be an infinitesimal and 
let $Z= (Z_1,\ldots,Z_m)$.
Let $M_\eps$ be 
the symmetric matrix corresponding to the quadratic form
(in $Y_0,\ldots,Y_\ell$) defined  by  
\[
M_\eps(X,Z) = 
(1-\eps)(Z_{1}Q_{1}^h +\cdots+ Z_{m}Q_{m}^h) + \eps \bar{Q},
\]
where $\bar{Q} = \sum_{i=0}^{\ell} i Y_i^2$.
Compute the polynomials
\begin{align}
\Lambda(Z,X,T) =& \det  (T\cdot {{\rm Id}}_{\ell+1}-M_\eps)= 
T^{\ell+1} + C_{\ell}T^{\ell} + \cdots+ 
C_0.
\end{align}

\item[Step 2.]
Using Algorithm 11.19 in \cite{BPRbook2} (Restricted Elimination),
compute a family of polynomials 
${\mathcal A}' \subset \R[\eps][Z_1,\ldots,Z_m,X_1,\ldots,X_k]$ such that 
for each $\rho \in {\rm Sign}({\mathcal A}')$,
and $(\omega,x) \in \RR(\rho,\Omega \times \R^{k}) \cap F$ 
the Thom encodings of the roots of $\Lambda(\omega,x,T)$ in 
$\R\la\eps\ra$ and  the number of non-negative roots of $\Lambda(\omega,x,T)$ in $\R\la\eps\ra$ stay fixed, as well as the
list of the non singular minors of size $\ell$ in $M_\eps(\omega,x,T)$
at each root of $\Lambda(M_\eps(\omega,x),T)$.
Let ${\mathcal A} \subset \R[Z_1,\ldots,Z_m,X_1,\ldots,X_k]$ be 
the set of all coefficients of the polynomials in ${\mathcal A}'$,
when each of them is written as a polynomial in $\eps$.

\item[Step 3.]
Using the algorithm implicit in Theorem \ref{the:triangulation} 
(Triangulation),
compute a semi-algebraic triangulation,
$$
h: \Delta \rightarrow F,
$$
respecting all the realizations of the weak sign conditions
on  ${\mathcal A}\cup {\mathcal P}$.

\item[Step 4.]
For each simplex $\sigma$ of $\Delta$, output the maps
$e_{\eps,0}(\sigma),\ldots,e_{\eps,\ell}(\sigma)$.

\end{algorithm}

\vspace{.1in}
\noindent
{\sc Complexity Analysis:}
The complexity of the algorithm is dominated by the complexity
of Step 3, which is $(s \ell m d)^{2^{O(m+k)}}$.
\qedsymbol

\vspace{.1in}
\noindent
{\sc Proof of Correctness:}
It follows 
from the fact that the triangulation respects all weak sign conditions on
${\mathcal A}$ that 
${\rm index}(\la \omega , {\mathcal Q}^h \ra (\cdot,x))$ 
is constant for $(\omega,x) \in h(\sigma)$ 
for any simplex $\sigma$ of $\Delta$,

Since $e_{\eps,0}(\sigma,\omega,x),$ $\ldots$, $e_{\eps,\ell}(\sigma,\omega,x)$ 
are orthonormal, so are $e_0(\sigma,\omega,x),$ $\ldots$, \\
$e_\ell(\sigma,\omega,x)$ 
for every $(\omega,x) \in h(\sigma)$. 
Moreover, letting $j = {\rm index}(\la \omega , {\mathcal Q}^h \ra (\cdot,x))$ for 
$(\omega,x) \in h(\sigma)$, we have that
$e_{\eps,j}(\sigma,\omega,x),\ldots,e_{\eps,\ell}(\sigma,\omega,x)$ 
span the sum of the non-negative eigenspaces
of $M_\eps(\omega,x)$, 
their images under the $\lim_\eps$ map will span the
sum of the non-negative eigenspaces of  $M(\omega,x)$.  \qed

\subsection{Computing Betti numbers in the homegeneous union case}
\label{subsec:cellcomplex}
Now that we obtained an Index Invariant Triangulation $\Delta$, 
our next goal is to construct a cell complex ${\mathcal K}(B,V)$ homotopy equivalent to
$B$ (see Notation \ref{def:B}) that will be used to compute the Betti numbers of $A^h$ (see Notation \ref{not:AhWh}). The cell complex ${\mathcal K}(B,V)$ is obtained by glueing together certain regular 
cell complexes,  ${\mathcal K}(\sigma)$,
where $\sigma \in \Delta$.

       \begin{figure}[hbt]
         \centerline{
           \scalebox{0.5}{
             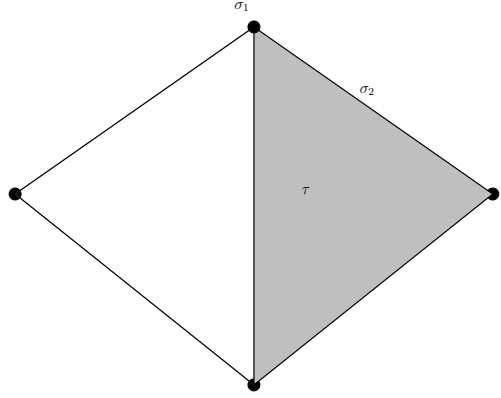
             }
           }
         \caption{The complex $\Delta$.}
         \label{fig-eg1}
       \end{figure}

       \begin{figure}[hbt]
         \centerline{
           \scalebox{0.5}{
             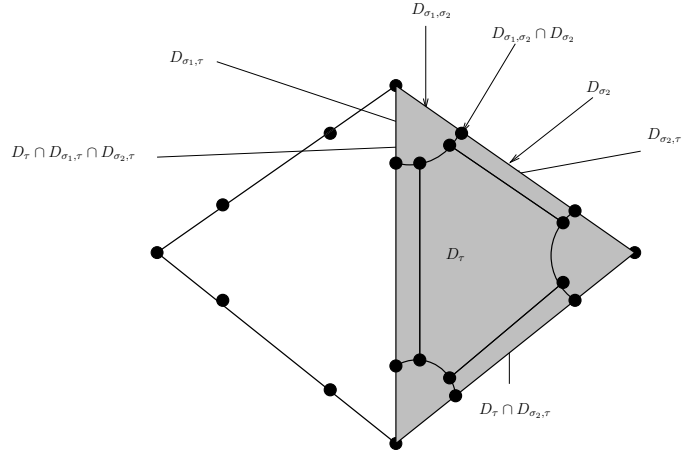
             }
           }
         \caption{The corresponding complex 
${\mathcal C}(\Delta)$.}
         \label{fig-eg2}
       \end{figure}

\subsubsection{Definition of ${\mathcal C}(\Delta)$}
Let $1 \gg  \eps_0 \gg \eps_1 \gg \cdots \gg \eps_{m+k} > 0$ be 
infinitesimals. 
For $\tau \in \Delta$, 
we denote by $D_{\tau}$ the subset of $\bar\tau$ defined by
\[
D_{\tau} = \{v \in \bar\tau \;\mid\; 
\dist(v,\theta) \geq \eps_{\dim(\theta)} \mbox{ for all }
\theta \prec \sigma \},
\]
where $\dist$ refers to the ordinary Euclidean distance.
Now, let $\sigma \prec \tau$ be two simplices of $\Delta$.
We denote by 
$D_{\sigma,\tau}$ the subset of $\bar\tau$ defined by
\[
D_{\sigma,\tau} = \{v \in \bar\tau \;\mid\; \dist(v,\sigma) \leq 
\eps_{\dim(\sigma)},
\mbox{ and }\dist(v,\theta) \geq \eps_{\dim(\theta)} \mbox{ for all }
\theta \prec \sigma \}.
\]

Note that 
$$
|\Delta| = 
\bigcup_{\sigma \in \Delta} D_{\sigma} \cup
\bigcup_{\sigma,\tau \in \Delta,\sigma \prec \tau} D_{\sigma,\tau}.
$$

Also, observe that the various $D_\tau$'s and $D_{\sigma,\tau}$'s 
are all homeomorphic to closed balls,
and moreover all non-empty intersections between them also have the same property.
Thus, the union of the $D_{\tau}$'s and $D_{\sigma,\tau}$'s together with the
non-empty intersections between them form  a regular cell complex,
${\mathcal C}(\Delta)$, whose underlying
topological space is $|\Delta|$ 
(see Figures \ref{fig-eg1} and \ref{fig-eg2}).

\subsubsection{Definition of ${\mathcal K}(\sigma)$ and ${\mathcal K}(\sigma,\tau)$ where $\sigma,\tau$ are simplices of $\Delta$ }
We now associate to each 
$D_{\sigma}$  (respectively,  $D_{\sigma,\tau}$)
a regular cell complex, ${\mathcal K}(\sigma)$, (respectively,
${\mathcal K}(\sigma,\tau)$)
homotopy equivalent to 
$\varphi_{1}^{-1}(h(D_\sigma))$
(respectively,
$
\displaystyle{
\varphi_{1}^{-1}(h(D_{\sigma,\tau})).
}
$

For each $\sigma \in \Delta$, 
and $(\omega,x) \in h(\sigma)$,
the orthonormal basis 
\[
\{e_{0}(\sigma,\omega,x)),\ldots,e_{\ell}(\sigma,\omega,x)\}
\] 
determines a complete flag of subspaces, 
${\mathcal F}(\sigma,\omega,x)$, consisting of
\begin{align*}
F^0(\sigma,\omega,x) = &0, \\
F^1(\sigma,\omega,x) = &\spanof(e_\ell(\sigma,\omega,x)),\\
F^2(\sigma,\omega,x) = &
\spanof(e_\ell(\sigma,\omega,x),e_{\ell-1}(\sigma,\omega,x)), \\
\vdots & \\
F^{\ell+1}(\sigma,\omega,x) =& \R^{\ell+1}.
\end{align*}

\begin{definition}
\label{def:cell}
For $0 \leq j \leq \ell$, let $c_{j}^+(\sigma,\omega,x)$ 
(respectively, $c_{j}^-(\sigma,\omega,x)$)
denote the $(\ell-j)$-dimensional cell consisting of the intersection of the
$F^{\ell-j+1}(\sigma,\omega,x)$
with the unit hemisphere in $\R^{\ell+1}$ 
defined by 
\begin{align*}
\{y \in \Sphere^\ell
\;\mid\;&
 \langle y,e_j(\sigma,\omega,x)\rangle \geq 0\}\\
\text{(respectively, } \quad
\{y \in \Sphere^\ell \;\mid\;& \langle y,e_j(\sigma,\omega,x)
\rangle \leq 0\}\quad
).
\end{align*}
\end{definition}

The regular cell complex ${\mathcal K}(\sigma)$ 
(as well as $\mathcal{K}(\sigma,\tau)$)
is defined as follows.

For each $v \in |\Delta|$ and $\sigma \in \Delta$, let 
$v(\sigma) \in |\sigma|$ denote the point of $|\sigma|$
closest to $v$.

The cells of ${\mathcal K}(\sigma)$ are
\[
\{(y,\omega,x) \mid y \in c_j^{\pm}(\sigma,\omega,x), (\omega,x) 
\in h(c)\},
\]
where ${\rm index}(\la \omega , {\mathcal Q}^h \ra(\cdot,x)) \leq j \leq \ell$,
and 
$c \in {\mathcal C}(\Delta)$
is either $D_\sigma$ itself, or a cell
contained in the boundary  of $D_\sigma$.

Similarly, the cells of ${\mathcal K}(\sigma,\tau)$ are
\[
\{(y,\omega,x) \mid y \in c_j^{\pm}(\sigma,h(v(\sigma))), v = h^{-1}(\omega,x) 
\in c\},
\]
where 
${\rm index}(\la \omega , {\mathcal Q}^h \ra(\cdot,x)) \leq j \leq \ell$,
$c \in {\mathcal C}(\Delta)$
is either $D_{\sigma,\tau}$ itself,  or a cell
contained in the boundary  of $D_{\sigma,\tau}$.

\subsubsection{Definition of ${\mathcal K}(D)$, 
where $D$ is a cell of ${\mathcal C}(\Delta)$}

Our next step is to obtain cellular subdivisions
of each non-empty intersection amongst the
spaces associated to the complexes constructed above, and thus obtain
a regular cell complex,
${\mathcal K}(B,V)$, whose associated space,
$|{\mathcal K}(B,V)|$, will be shown to be 
homotopy equivalent to $B$ (Proposition \ref{pro:iso2} below).

First notice that $|{\mathcal K}(\sigma',\tau')|$ (respectively, 
$|{\mathcal K}(\sigma)|$) has a non-empty intersection with 
$|{\mathcal K}(\sigma,\tau)|$ only if $D_{\sigma',\tau'}$ (respectively,
$D_{\sigma'}$) intersects $D_{\sigma,\tau}$. 

Let $D$ be some non-empty intersection amongst the 
$D_{\sigma}$'s and $D_{\sigma,\tau}$'s,
that is $D$ is a cell of ${\mathcal C}(\Delta)$.
Then $D \subset |\tau|$ for a unique simplex $\tau \in \Delta$, 
and 
$$
\displaylines{
D =  D_{\sigma_1,\tau} \cap \cdots \cap D_{\sigma_p,\tau} 
\cap D_\tau,
}
$$
with $\sigma_1 \prec \sigma_2 \prec \cdots \prec \sigma_p \prec \sigma_{p+1} =
\tau$
and $p \leq m+k$. 

For each $i, 1 \leq i \leq p+1$,
let
$ \{ f_0(\sigma_i,v),\ldots, f_{\ell}(\sigma_i,v)\}$
denote a orthonormal basis of $\R^{\ell+1}$ where
\[
f_j(\sigma_i,v) = 
\lim_{t \rightarrow 0}
e_j(\sigma_i, h(t v(\sigma_i) + (1-t) v(\sigma_1))), 0 \leq j \leq \ell,
\] 
and let
${\mathcal F}(\sigma_i,v)$
denote the corresponding flag,
consisting of
\begin{align*}
F^0(\sigma_i,v) = &0, \\
F^1(\sigma_i,v) = &\spanof(f_\ell(\sigma_i,v)),\\
F^2(\sigma_i,v) = &
\spanof(f_\ell(\sigma_i,v),f_{\ell-1}(\sigma_i,v)), \\
\vdots & \\
F^{\ell+1}(\sigma_i,v) =& \R^{\ell+1}.
\end{align*}
 
We thus have $p+1$ different flags, 
\[
{\mathcal F}(\sigma_1,v),
\ldots, {\mathcal F}(\sigma_{p+1},v), 
\]
and these give rise to $p+1$ different regular cell decompositions of 
$\Sphere^\ell$.

       \begin{figure}[h]
         \centerline{
           \scalebox{0.5}{
             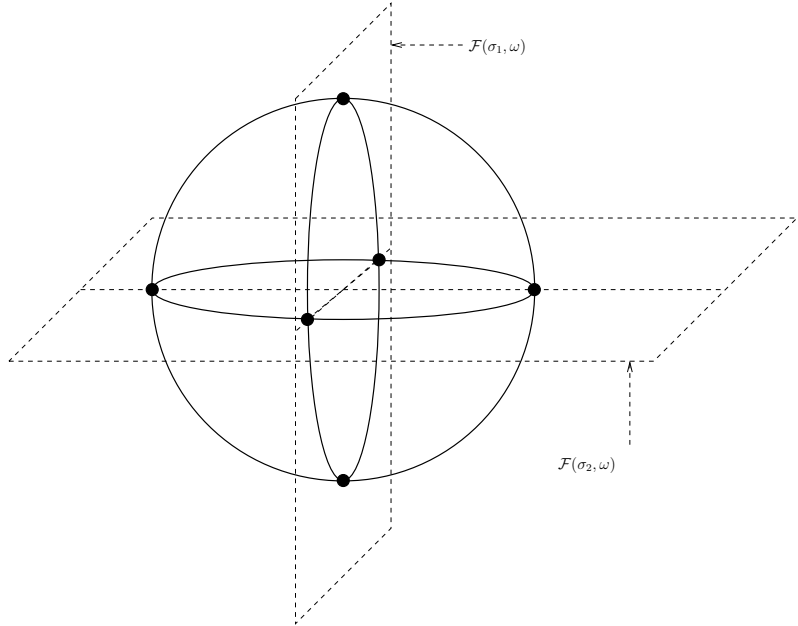
             }
           }
\caption{The cell complex ${\mathcal K}'(D,v)$.}  
         \label{fig-eg5}
       \end{figure}

There is a unique smallest regular cell complex, 
${\mathcal K}'(D,v)$,  
that refines all these cell decompositions,
whose cells are the following.
Let $L \subset \R^{\ell+1}$ be  any $j$-dimensional linear subspace,
$0 \leq j \leq \ell+1$, which is an intersection
of linear subspaces $L_1,\ldots,L_{p+1}$, where 
$L_i \in {\mathcal F}(\sigma_i,v), 1 \leq i \leq p+1 \leq m+k+1$. 
The elements of the flags,
${\mathcal F}(\sigma_1,v),
\ldots, {\mathcal F}(\sigma_{p+1},v)$ of dimensions $j+1$,
partition $L$ into polyhedral cones of various dimensions. The 
intersections of these cones with $\Sphere^{\ell}$, 
over all such subspaces $L \subset \R^{\ell+1}$, are the cells of 
${\mathcal K}'(D,v)$.
Figure \ref{fig-eg5} illustrates the refinement described above in case
of two flags in $\R^3$.
We 
denote by ${\mathcal K}(D,v)$ the sub-complex of 
${\mathcal K}'(D,v)$ consisting of only those cells included in
$L^+(\sigma_1,h(v(\sigma_1))) \cap \Sphere^\ell$.

We now triangulate $h(D)$ using 
the algorithm implicit in Theorem  \ref{the:triangulation} (Triangulation),
so that the combinatorial type of the arrangement of flags, 
\[
{\mathcal F}(\sigma_1,v),
\ldots, {\mathcal F}(\sigma_{p+1},v)
\] 
and hence 
the combinatorial type of the 
cell decomposition ${\mathcal K}'(D,v)$,  
stays invariant over the image,
$h_D(\theta)$,  of  each simplex, $\theta$, of this triangulation.
%%sb corrects
\hide{
Notice that the combinatorial type of the cell 
decomposition ${\mathcal K}'(D,v)$,  is determined by the signs of the inner 
products,
$\la f_j(\sigma_i,v), f_{j'}(\sigma_{i'},v) \ra$ where
$0 \leq j,j' \leq \ell, 1 \leq i,i' \leq p+1$.
}
%%sb end of hide

Note that in case the eigenvalues of $M(h(v))$ are all distinct,
we have that
${\mathcal F}(\sigma_1,v)= \cdots ={\mathcal F}(\sigma_{p+1},v)$
since the vectors 
\[
f_0(\sigma_i,v),\ldots,f_\ell(\sigma_i,v) 
\]
is an orthonromal basis of eigen-vectors with each
$f_j(\sigma_i,v)$ uniquely defined upto sign.
Thus the cell decompositions of $\Sphere^\ell$ induced by the
flags 
${\mathcal F}(\sigma_1,v)= \cdots ={\mathcal F}(\sigma_{p+1},v)$
are identical to each other and hence also to ${\mathcal K}'(D,v)$.

However, if the eigenvalues of $M(h(v))$ are not all distinct
then the refinement  ${\mathcal K}'(D,v)$ can be
non-trivial.
For example suppose we have that
$\lambda_{\alpha}(h(v)) = \cdots = \lambda_{\beta}(h(v)), 0 \leq \alpha < \beta
\leq \ell$.
Then in general the sub-flags consisting of
subspaces  $F_{\ell+1-\beta}(\sigma_i,v) \subset \cdots
\subset  F_{\ell+1-\alpha}(\sigma_i,v)$ and
$F_{\ell+1-\beta}(\sigma_j,v) \subset \cdots\subset 
F_{\ell+1-\alpha}(\sigma_j,v)$
will in general not coincide for $i \neq j$.

In this case the combinatorial type of the refinement
${\mathcal K}'(D,v)$ is determined by the 
dimensions of the intersections 
amongst the subspaces 
\[
F_{\ell+1-\beta}(\sigma_i,v),
\ldots, F_{\ell+1-\alpha}(\sigma_i,v), 1 \leq i \leq p+1.
\]
The dimensions of these intersections are 
determined by the minimal linear dependencies amongst the vectors
$f_{i}(\sigma_{j},v)$, $\alpha \leq i \leq \beta, 1 \leq j \leq p+1$,
and these are in turn 
determined by the ranks of matrices with at most $\ell+1$ rows 
of the following form.
The rows of the matrix consists of at most $p+1$ blocks,
with the $j$-th block of the shape
$f_{\alpha}(\sigma_{j},v),\ldots, f_{\alpha_j}(\sigma_{j},v)$,
where $\alpha \leq \alpha_j \leq \beta$.
(Note that every row of the above matrix consists of rational functions 
evaluated at a single root $\lambda_\alpha(h(v))$ of $\Lambda(M(h(v)),T)$,
and this root is common to all the rows. This fact is important since
in order to perform algebraic computations on the entries of the matrix
we need  to eliminate just one variable corresponding to this single root.)

Introducing an infinitesimal $\delta$ such that $1 \gg \delta \gg \eps >0$, 
we note that for each $0 \leq j \leq \ell$,
\begin{align*}
f_j(\sigma_i,h^{-1}(\omega,x)) &= \lim_\delta
e_{\eps,j}(h(\delta v(\sigma_i) + (1-\delta) v(\sigma_1))) \\
&= 
\lim_{t \rightarrow 0}
e_j(\sigma_i, h(t v(\sigma_i) + (1-t) v(\sigma_1))).
\end{align*}

%%sb adds
We consider all matrices with at most
$\ell+1$ rows consisting of blocks of the
shape, 
$f_{\alpha}(\sigma_{j},v),\ldots, f_{\alpha_j}(\sigma_{j},v)$, with
$0 \leq \alpha \leq \alpha_j \leq \beta \leq \ell, 0 \leq j \leq m+k$, and
$\lambda_{\alpha}(h(v)) = \cdots = \lambda_{\beta}(h(v))$.
The number of such matrices is clearly bounded by $\ell^{O(m+k)}$.

Using the uniform formula defining $e_{\eps,j}(\sigma_i)$ and 
Proposition 14.7 of \cite{BPRbook2}, and Algorithm 8.16 
in \cite{BPRbook2} (for computing determinants over an arbitrary domain),
we compute a family of polynomials in $\R[Z_1,\ldots,Z_m,X_1,\ldots,X_k]$
such that over each sign consition of this family the rank of the
given matrix stays fixed.

%%sb corrects
\hide{
Using the uniform formula defining $e_{\eps,j}(\sigma_i)$ and Proposition 14.7 of \cite{BPRbook2},
the vanishing or non-vanishing of the 

inner products
\[
\la f_j(\sigma_i,h^{-1}(\omega,x)), f_{j'}(\sigma_{i'},h^{-1}(\omega,x)) \ra, 
0 \leq j,j' \leq \ell, 1 \leq i,i' \leq p+1.
\]

\[
{\mathcal A}_{D}\subset \R[Z_1,\ldots,Z_m,X_1,\ldots,X_k]
\]
obtained by taking the coefficients of the 
inner products
\[
\la e_{\eps,j}(h(\delta v(\sigma_i) + (1-\delta) v(\sigma_1))), e_{\eps,j'}(h(\delta v(\sigma_i') + (1-\delta) v(\sigma_1)))\ra
\]
}
%%sb end of hide

Let
\[
{\mathcal A}_{D}\subset \R[Z_1,\ldots,Z_m,X_1,\ldots,X_k]
\]
be the union of all these sets of polynomials.

The combinatorial type of the cell 
decomposition ${\mathcal K}'(D,v)$
will stay invariant as $(\omega,x)$ varies over each connected component of 
any realizable sign condition on 
${\mathcal A}_{D}\subset \R[Z_1,\ldots,Z_m,X_1,\ldots,X_k]$.

Given the degree bounds on the rational functions defining 
$\{e_{\eps,0}(\sigma),\ldots,e_{\eps,\ell}(\sigma)\}$,
$(\omega,x) \in h(\sigma)$, 
%%sb adds
and the complexity of Algorithm 8.16 
in \cite{BPRbook2}, 
it is clear
that the number and degrees of the polynomials in the family ${\mathcal A}_D$
are bounded by $(s\ell m  d)^{2^{O(m+k)}}$.
We then
use the algorithm implicit in Theorem  \ref{the:triangulation} (Triangulation),
with ${\mathcal A}_{D}$ as input, to obtain the required triangulation.

The closures of the sets
\[
\{(y,\omega,x) \;\mid\; y \in c \in  {\mathcal K}(D,h^{-1}(\omega,x)), \; 
(\omega,x) \in h(h_D(\theta))\}
\] 
form a regular cell complex which we denote by
${\mathcal K}(D)$.

The following proposition gives an upper bound on the size of the
complex ${\mathcal K}(D)$. We use the notation introduced in the previous
paragraph.
\begin{proposition}
\label{pro:complexity}
For each $(\omega,x) \in h(D)$, the number of cells in 
${\mathcal K}(D,h^{-1}(\omega,x))$
is bounded by $\ell^{O(m+k)}$. Moreover, the number of cells in the complex
${\mathcal K}(D)$ is bounded by $(s\ell m d)^{2^{O(m+k)}}$.
\end{proposition}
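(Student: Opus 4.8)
The plan is to bound the two quantities separately, starting with the pointwise count and then assembling the global bound by multiplying through the triangulation and the cells of $\mathcal{C}(\Delta)$. For the first assertion, fix $(\omega,x)\in h(D)$ and recall that $D = D_{\sigma_1,\tau}\cap\cdots\cap D_{\sigma_p,\tau}\cap D_\tau$ with $p\le m+k$, so that $\mathcal{K}'(D,v)$ is the common refinement of at most $p+1\le m+k+1$ regular cell decompositions of $\Sphere^\ell$, each coming from a complete flag $\mathcal{F}(\sigma_i,v)$. I would argue that the cells of $\mathcal{K}'(D,v)$ are in bijection with certain chains of linear subspaces obtained as intersections $L_1\cap\cdots\cap L_{p+1}$ with $L_i\in\mathcal{F}(\sigma_i,v)$, together with a choice of a ``top'' half-space at each nested step (the $c_j^\pm$ distinction of Definition \ref{def:cell}). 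Since each flag has only $\ell+2$ members, the number of tuples $(L_1,\ldots,L_{p+1})$ is at most $(\ell+2)^{p+1}=(\ell+2)^{O(m+k)}$; refining within each such subspace by the induced sub-flags multiplies in only a further $2^{\ell+1}$ factor for the sign choices, and $2^{\ell+1}\le \ell^{O(1)}\cdot 2^{O(1)}$ is absorbed. Hence the number of cells of $\mathcal{K}'(D,v)$, and a fortiori of its subcomplex $\mathcal{K}(D,v)$, is $\ell^{O(m+k)}$. (Alternatively one can invoke the fact that an arrangement of $O(m+k)$ flags — equivalently $O(\ell(m+k))$ hyperplanes through the origin intersected with the sphere — has $(\ell(m+k))^{O(\ell)}$ faces, but the more careful subspace-chain count gives the cleaner exponent $\ell^{O(m+k)}$ stated in the proposition, because the flags are nested rather than generic.)

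For the second assertion I would count the cells of $\mathcal{K}(D)$ as a product of two factors: the number of simplices $\theta$ in the triangulation of $h(D)$ over which the combinatorial type of $\mathcal{K}'(D,v)$ is constant, times the pointwise bound $\ell^{O(m+k)}$ just established. The triangulation of $h(D)$ is produced by the Triangulation algorithm of Theorem \ref{the:triangulation} applied to the family $\mathcal{A}_D\subset\R[Z_1,\ldots,Z_m,X_1,\ldots,X_k]$, which lives in $m+k$ variables. So I need a bound on the number and degrees of the polynomials in $\mathcal{A}_D$. Here I would use: (i) $\mathcal{A}_D$ is built from the coefficients of ranks of at most $\ell^{O(m+k)}$ matrices, each of size at most $(\ell+1)\times(\ell+1)$, whose entries are the rational functions $e_{\eps,j}(\sigma_i)$ from Section \ref{subsec:parametrizedeigenvectors}; (ii) those rational functions have numerator and denominator degrees bounded in terms of $d$, $\ell$, $m$, $k$ coming out of Cramer's rule applied to $M_\eps$ together with the Restricted Elimination step, which already carries a $2^{O(m+k)}$ in the exponent; (iii) computing the relevant minors/determinants via Algorithm 8.16 of \cite{BPRbook2} and clearing denominators keeps the degrees within $(s\ell m d)^{2^{O(m+k)}}$ and produces at most that many polynomials. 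Feeding a family of $N$ polynomials of degree $\le N$ in $m+k$ variables into Theorem \ref{the:triangulation} yields a triangulation with $N^{O(1)^{m+k}}$ simplices; with $N=(s\ell m d)^{2^{O(m+k)}}$ this is again $(s\ell m d)^{2^{O(m+k)}}$ (absorbing $O(1)^{m+k}$ into the $2^{O(m+k)}$). Multiplying by the pointwise bound $\ell^{O(m+k)}\le (s\ell m d)^{2^{O(m+k)}}$ gives the claimed bound on the number of cells of $\mathcal{K}(D)$.

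The main obstacle, and the place where I would spend the most care, is step (ii)–(iii) above: tracking the degrees of the parametrized eigenvector rational functions $e_{\eps,j}(\sigma)$ and then of the minors built from them, so that the tower of exponentials does not grow beyond $2^{O(m+k)}$ in the exponent. In particular one must be careful that the infinitesimals $\eps$ (from $M_\eps$) and $\delta$ (from the limit defining $f_j(\sigma_i,v)$) are handled by the $\lim$-computations without each contributing an extra variable to the elimination — this is exactly the remark in the text that every row of the rank-matrices involves a \emph{single common} root $\lambda_\alpha(h(v))$ of $\Lambda(M(h(v)),T)$, so only one algebraic variable needs eliminating per matrix. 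Granting the degree bookkeeping, the rest is the routine ``number of polynomials $\times$ degree, fed into cylindrical-decomposition-style triangulation'' estimate, and the two stated bounds follow by collecting exponents.
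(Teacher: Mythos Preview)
Your overall strategy matches the paper's: count subspaces $L$ as intersections of flag members, then count cones within each $L$, and for the second part invoke the Triangulation bound on the family $\mathcal{A}_D$.

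There is, however, a genuine slip in your pointwise count. You write that refining within each subspace $L$ contributes a factor $2^{\ell+1}$ and that ``$2^{\ell+1}\le \ell^{O(1)}\cdot 2^{O(1)}$ is absorbed.'' That inequality is false: $2^{\ell+1}$ is exponential in $\ell$, not polynomial, so it cannot be absorbed into $\ell^{O(m+k)}$. The correct observation (and this is exactly what the paper does) is that inside a fixed $j$-dimensional intersection $L=L_1\cap\cdots\cap L_{p+1}$, the partition into polyhedral cones is induced by the $(j{+}1)$-dimensional members of the $p+1$ flags, i.e.\ by at most $p+1\le m+k+1$ hyperplanes through the origin in $L$. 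An arrangement of at most $m+k+1$ central hyperplanes has at most $2^{m+k+1}$ full-dimensional chambers, so the per-subspace factor is $2^{O(m+k)}$, not $2^{\ell+1}$. Combined with your (correct) count of $(\ell+2)^{p+1}\le (\ell+1)^{O(m+k)}$ for the number of subspaces $L$, this gives the desired $\ell^{O(m+k)}$. Your ``sign choices at each nested step'' picture would be fine if the steps were indexed by the $p+1$ flags (giving $2^{p+1}$), but as written the exponent $\ell+1$ is the wrong one.

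Your argument for the second part is essentially the paper's, with the degree-tracking spelled out more explicitly; the paper simply cites the already-stated bound $(s\ell m d)^{2^{O(m+k)}}$ on the number and degrees of polynomials in $\mathcal{A}_D$ together with the complexity estimate in Theorem~\ref{the:triangulation}. Your care about only one algebraic root needing elimination per rank-matrix is exactly the point the text makes, so once the first-part arithmetic is corrected you are in agreement with the paper.
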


\begin{proof}
The first part of the proposition follows from the fact that there are at
most $(\ell+1)^{m+k+1}$ 
choices for the linear space $L$ and the number of $(j-1)$
dimensional cells contained in $L$ is bounded by $2^{m+k}$ (which is an upper
bound on the number of full dimensional cells in an arrangement of at most
$m+k$ hyperplanes).
The second part is a consequence of the complexity estimate
in Theorem \ref{the:triangulation} (Triangulation) and the bounds on
number and degrees of polynomials in the family ${\mathcal A}_D$
stated above.
\end{proof}

\subsubsection{Definition of ${\mathcal K}(B,V)$}

Note that there is a homeomorphism
$$i_{D,\sigma_i}: |{\mathcal K}(\sigma_i,\tau)| \cap \varphi_1^{-1}(h(D))
\rightarrow |{\mathcal K}(D)|$$
which takes each cell of 
$|{\mathcal K}(\sigma_i,\tau)| \cap \varphi_1^{-1}(h(D))$
to a union of cells in ${\mathcal K}(D)$. 
We use these homeomorphisms 
to glue the cell complexes ${\mathcal K}(\sigma_i,\tau)$
together to form the cell complex ${\mathcal K}(B,V)$.

\begin{definition}
\label{eqn:definitionofcellsimplex}
The complex
${\mathcal K}(B,V)$ is the union of all the complexes
${\mathcal K}(D)$ constructed above, where we use the maps
$i_{D,\sigma_i}$ to make the obvious identifications. It is clear that 
${\mathcal K}(B,V)$ so defined is a regular cell complex. 
\end{definition}

We have 
\begin{proposition}
\label{pro:iso2}
$|{\mathcal K}(B,V)|$ is homotopy equivalent to $B$.
\end{proposition}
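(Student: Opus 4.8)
The plan is to build the homotopy equivalence in stages, matching the stratification of $|\Delta|$ by the cells of ${\mathcal C}(\Delta)$ with a corresponding decomposition of $B$, and then invoke the gluing lemma (Mayer--Vietoris for homotopy colimits / the nerve-type lemma for regular cell complexes). First I would recall that, by construction, $\varphi_1\colon B \to F$ has fibres that, over $h(\sigma)$ with ${\rm index}(\la\omega,{\mathcal Q}^h\ra(\cdot,x))=j$, deformation retract onto $\Sphere^\ell \cap \spanof(e_j(\sigma,\omega,x),\ldots,e_\ell(\sigma,\omega,x))$; this is precisely Lemma~\ref{lem:sphere} together with the observation in Definition~\ref{def:iit} that $\{e_j(\sigma,\omega,x),\ldots,e_\ell(\sigma,\omega,x)\}$ spans $L^+(\omega,x)$. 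Hence over each piece $h(D_\sigma)$ (resp. $h(D_{\sigma,\tau})$) the restricted space $\varphi_1^{-1}(h(D_\sigma))$ is homotopy equivalent to the total space of the corresponding hemisphere bundle, and the regular cell complex ${\mathcal K}(\sigma)$ (resp. ${\mathcal K}(\sigma,\tau)$) is designed so that $|{\mathcal K}(\sigma)|$ is a deformation retract of $\varphi_1^{-1}(h(D_\sigma))$: the cells $c_j^\pm(\sigma,\omega,x)$ of Definition~\ref{def:cell} give, fibrewise, the standard regular CW structure on the sphere $\Sphere^{\ell-j+1}\subset\R^{\ell+1}$ adapted to the flag ${\mathcal F}(\sigma,\omega,x)$, and this flag varies continuously and combinatorially trivially over $h(\sigma)\supset h(D_\sigma)$ by the index-invariance of the triangulation.

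Next I would handle the gluing. The space $B$ decomposes as $\varphi_1^{-1}(|\Delta|) = \bigcup_\sigma \varphi_1^{-1}(h(D_\sigma)) \cup \bigcup_{\sigma\prec\tau}\varphi_1^{-1}(h(D_{\sigma,\tau}))$, mirroring the decomposition of $|\Delta|$ into the cells of ${\mathcal C}(\Delta)$. Each nonempty intersection of these pieces lies over a cell $D$ of ${\mathcal C}(\Delta)$ of the form $D_{\sigma_1,\tau}\cap\cdots\cap D_{\sigma_p,\tau}\cap D_\tau$; over such a $D$ the various hemisphere-bundle structures coming from the flags ${\mathcal F}(\sigma_1,v),\ldots,{\mathcal F}(\sigma_{p+1},v)$ need not agree, which is exactly why ${\mathcal K}(D,v)$ is defined as the common refinement, and why $h(D)$ is further triangulated so that the combinatorial type of ${\mathcal K}'(D,v)$ is locally constant. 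I would verify that $|{\mathcal K}(D)|$ is a deformation retract of $\varphi_1^{-1}(h(D))$ — the retraction being the fibrewise collapse of $\varphi_1^{-1}(\omega,x)$ onto the hemisphere $L^+(\sigma_1,h(v(\sigma_1)))\cap\Sphere^\ell$, subdivided by ${\mathcal K}(D,v)$ — and that the homeomorphisms $i_{D,\sigma_i}$ of Definition~\ref{eqn:definitionofcellsimplex} are compatible with these retractions, so that the retractions over the individual pieces patch to a single retraction of $B$ onto $|{\mathcal K}(B,V)|$. Since ${\mathcal C}(\Delta)$ is a regular cell complex with underlying space $|\Delta|$ and all the intersection patterns are contractible (closed balls, as noted), the patched maps are genuine homotopy equivalences by the standard gluing lemma for open (or closed, by passing to a good cover) covers in which all finite intersections are contractible.

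The main obstacle I expect is the compatibility of the fibrewise deformation retractions across the faces of ${\mathcal C}(\Delta)$: the retraction of $\varphi_1^{-1}(\omega,x)$ onto its $L^+$-hemisphere depends on the choice of eigenvector frame, and on a face $D_{\sigma,\tau}$ one must use the limiting frame $f_j(\sigma,v)=\lim_{t\to 0} e_j(\sigma,h(tv(\sigma)+(1-t)v(\sigma_1)))$ rather than $e_j(\sigma,\cdot)$ itself, since the latter need not extend continuously to the boundary of $h(\sigma)$. Checking that these limiting frames — and the refined cell structures ${\mathcal K}(D,v)$ they induce — fit together consistently on all the strata simultaneously (there are up to $p+1\le m+k+1$ of them meeting along $D$), and that the resulting retractions agree on overlaps, is the delicate point; the infinitesimal thickening by $\eps_0\gg\cdots\gg\eps_{m+k}$ and the auxiliary $\delta$ are precisely the device that makes only boundedly many frames interact at once and forces the limits to be well-defined. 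Modulo this bookkeeping, the conclusion then follows formally: $|{\mathcal K}(B,V)|\simeq B$, and for detailed verification of the retraction compatibility one can cite the analogous construction in \cite{Bas05-top}, of which this is the parametrized generalization.
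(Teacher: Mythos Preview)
Your approach is genuinely different from the paper's. You try to build a direct homotopy equivalence $|{\mathcal K}(B,V)|\simeq B$ by decomposing $B$ along the cells of ${\mathcal C}(\Delta)$, producing fibrewise deformation retractions onto the pieces $|{\mathcal K}(D)|$, and then gluing these via a nerve/Mayer--Vietoris argument. The paper instead routes everything through the intermediate space $C$ of \eqref{eqn:definition_of_C}: since Proposition~\ref{pro:homotopy1} already gives $C\simeq B$, it suffices to show $|{\mathcal K}(B,V)|\simeq C$. This is done by setting $X_{m+k}=|{\mathcal K}(B,V)|$ and successively collapsing the infinitesimals, $X_j=\lim_{\eps_j}X_{j+1}$; each step is a homotopy equivalence by the Vietoris--Smale theorem, and $X_0=C$ by construction. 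No explicit gluing or compatibility check is ever performed.

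What the paper's route buys is precisely the avoidance of the ``delicate point'' you yourself flag: you never have to verify that the fibrewise retractions over different cells $D$ agree on overlaps, because the sequence of infinitesimal limits does that work implicitly. Your route, by contrast, has a subtle issue you do not quite address: the cells of ${\mathcal K}(D)$ have fibre over $v\in D$ lying in $L^+(\sigma_1,h(v(\sigma_1)))\cap\Sphere^\ell$, where $v(\sigma_1)$ is the nearest point of $\sigma_1$ to $v$, \emph{not} in $L^+(h(v))\cap\Sphere^\ell$. Thus $|{\mathcal K}(B,V)|$ is not literally a subspace of $B$ (or even of $C$) over the extended field, so ``a retraction of $B$ onto $|{\mathcal K}(B,V)|$'' does not make sense as stated; one would first need a homeomorphism identifying $|{\mathcal K}(B,V)|$ with a subspace of $\E(B,\cdot)$, and this identification is exactly what the limit argument in the paper supplies. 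Your outline could likely be completed, but the missing step is more than bookkeeping.
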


\begin{proof}
We have from Proposition \ref{pro:homotopy1} that the
semi-algebraic set $C \subset B$ (see \eqref{eqn:definition_of_C} 
for definition) is homotopy equivalent to $B$. We now prove that
$|{\mathcal K}(B,V)|$ is homotopy equivalent to $C$ which will prove the
proposition.

Let $X_{m+k} = |{\mathcal K}(B,V)|$ and for
$0 \leq j \leq m+k-1$, let $X_j = \lim_{\eps_{j}} X_{j+1}$.

It follows from an application of the
Vietoris-Smale theorem \cite{Smale} that for each $j, 0 \leq j \leq m+k-1$, 
$\E(X_j,\R\la\eps_0,\ldots,\eps_j\ra)$ is homotopy equivalent to
$X_{j+1}$. Also, by construction of ${\mathcal K}(B,V)$, we have that
$X_0 = \lim_{\eps_0} |{\mathcal K}(B,V)| = C$,
which proves the proposition.
\end{proof}

We also have
\begin{proposition}
\label{pro:complexity2}
The number of cells in the
cell complex ${\mathcal K}(B,V)$ is 
bounded by $(s\ell m d)^{2^{O(m+k)}}$.
\end{proposition}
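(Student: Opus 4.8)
The plan is to bound the number of cells in $\mathcal{K}(B,V)$ by summing, over the cells $D$ of $\mathcal{C}(\Delta)$, the number of cells in each $\mathcal{K}(D)$, and then controlling both factors separately using the estimates already established. First I would recall from Proposition \ref{pro:complexity} that for each cell $D$ of $\mathcal{C}(\Delta)$, the number of cells in $\mathcal{K}(D)$ is bounded by $(s\ell m d)^{2^{O(m+k)}}$; this already incorporates the bound $\ell^{O(m+k)}$ on the number of cells in each fibre $\mathcal{K}(D,h^{-1}(\omega,x))$ together with the number of simplices $\theta$ in the triangulation of $h(D)$ produced by the algorithm implicit in Theorem \ref{the:triangulation} applied to the family $\mathcal{A}_D$. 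So the remaining task is purely to bound the number of cells $D$ in $\mathcal{C}(\Delta)$, i.e. the number of sets $D_\tau$ and $D_{\sigma,\tau}$ together with their non-empty intersections, and to observe that the product of this count with $(s\ell m d)^{2^{O(m+k)}}$ is again of the form $(s\ell m d)^{2^{O(m+k)}}$.

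Next I would estimate the size of the index invariant triangulation $\Delta$ itself. By the Complexity Analysis following Algorithm \ref{alg:triangulation}, computing $\Delta$ has complexity $(s\ell m d)^{2^{O(m+k)}}$; in particular the number of simplices of $\Delta$ is bounded by the same quantity, since each simplex is produced (and written down) during the execution of the algorithm. Now $\mathcal{C}(\Delta)$ has one cell $D_\tau$ per simplex $\tau\in\Delta$, one cell $D_{\sigma,\tau}$ per pair $\sigma\prec\tau$, and, by the combinatorial structure described before Proposition \ref{pro:complexity}, each further cell $D$ is an intersection $D_{\sigma_1,\tau}\cap\cdots\cap D_{\sigma_p,\tau}\cap D_\tau$ with $\sigma_1\prec\cdots\prec\sigma_p\prec\sigma_{p+1}=\tau$ and $p\le m+k$; such a chain is determined by choosing at most $m+k+1$ simplices of $\Delta$, so the number of cells of $\mathcal{C}(\Delta)$ is at most $(\#\Delta)^{m+k+1}$. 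Raising $(s\ell m d)^{2^{O(m+k)}}$ to the power $m+k+1$ gives again a bound of the form $(s\ell m d)^{2^{O(m+k)}}$, since $(m+k+1)\cdot 2^{O(m+k)} = 2^{O(m+k)}$.

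Finally I would assemble the estimate: the total number of cells of $\mathcal{K}(B,V)$ is at most
\[
\sum_{D\in\mathcal{C}(\Delta)} \#\mathcal{K}(D) \;\le\; \#\mathcal{C}(\Delta)\cdot (s\ell m d)^{2^{O(m+k)}} \;\le\; (s\ell m d)^{2^{O(m+k)}}\cdot (s\ell m d)^{2^{O(m+k)}} \;=\; (s\ell m d)^{2^{O(m+k)}},
\]
using that a product (and a bounded power) of quantities of the form $(s\ell m d)^{2^{O(m+k)}}$ is again of that form. The cells of $\mathcal{K}(B,V)$ obtained by the gluing identifications $i_{D,\sigma_i}$ of Definition \ref{eqn:definitionofcellsimplex} are a subset of the disjoint union of the cells of the various $\mathcal{K}(D)$ (identifications only decrease the count), so the same bound holds for $\mathcal{K}(B,V)$ itself. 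I expect the only genuinely delicate point to be making sure that the exponent bookkeeping is honest — specifically that passing from $\Delta$ to $\mathcal{C}(\Delta)$ via the $(m+k+1)$-fold chains, and then to $\mathcal{K}(B,V)$ via the per-cell bound of Proposition \ref{pro:complexity}, keeps the exponent of the doubly-exponential term linear in $m+k$ rather than, say, quadratic; this is fine because each step multiplies the exponent by a factor that is itself only singly exponential in a constant or absorbs into the $O(\cdot)$, but it should be stated explicitly. Everything else is a direct invocation of Proposition \ref{pro:complexity} and the complexity analyses of Algorithm \ref{alg:triangulation} and Theorem \ref{the:triangulation}.
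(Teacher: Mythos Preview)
Your proposal is correct and follows essentially the same approach as the paper: invoke Proposition~\ref{pro:complexity} for the per-cell bound on $\mathcal{K}(D)$ and combine it with the bound $(s\ell m d)^{2^{O(m+k)}}$ on the number of cells in $\mathcal{C}(\Delta)$. The paper's proof is a single sentence stating exactly these two ingredients; your version supplies the justification for the $\mathcal{C}(\Delta)$ bound (via the size of $\Delta$ from the complexity analysis of Algorithm~\ref{alg:triangulation} and the $(m+k+1)$-chain structure of the cells) that the paper leaves implicit.
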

\begin{proof}
The proposition is a consequence of Proposition \ref{pro:complexity} and
the fact that the number of cells in the complex 
${\mathcal C}(\Delta)$ is bounded by $(s\ell m d)^{2^{O(m+k)}}.$
\end{proof}

\subsubsection{Algorithm for computing the Betti numbers in the 
homogeneous
union case}

We now describe formally an algorithm for computing the Betti numbers of $A^h$
using the complex 
${\mathcal K}(B,V)$ described above.

\begin{algorithm}[Betti numbers, homogeneous union case]
\label{alg:union_betti0}
\begin{itemize}
\item[]
\item[{\sc Input}]
\item[]
\item
A family of polynomials
$
\displaystyle{
{\mathcal Q}^h \subset \R[Y_0,\ldots,Y_\ell,X_1,\ldots,X_k],
}
$
homogeneous of degree 2 in the variables $Y_0,\ldots,Y_\ell$,
$\deg_{X}(Q^h) \leq d, Q^h \in {\mathcal Q}^h, \#({\mathcal Q}^h)=m$,

\item
another family of polynomials
$
{\mathcal P} 
\subset \R[X_1,\ldots,X_k],
$
with $\deg_{X}(P) \leq d, P \in {\mathcal P}, \#({\mathcal P})=s$,
\item
a ${\mathcal P}$-closed formula $\Phi(x)$ defining a bounded
${\mathcal P}$-closed semi-algebraic set $V \subset \R^k$.

%%sb hides
\hide{
\item
the semi-algebraic set
$$
\displaylines{
A^h = \bigcup_{Q^h \in {\mathcal Q}^h }
\{ (y,x) \;\mid\; |y|=1\; \wedge\; Q(y,x) \leq 0\; \wedge \; \Phi(x)\}.
}
$$
}
%%sb end of hide
\end{itemize}

\item [{\sc Output}]
\item
\begin{itemize}
\item
a description of the cell complex 
${\mathcal K}(B,V)$,
\item
the Betti numbers of $A^h$ where
the semi-algebraic set $A^h$ is defined by
$$
\displaylines{
A^h = \bigcup_{Q^h \in {\mathcal Q}^h }
\{ (y,x) \;\mid\; |y|=1\; \wedge\; Q(y,x) \leq 0\; \wedge \; \Phi(x)\}.
}
$$
\end{itemize}

\item [{\sc Procedure}]
\item[]
\item[Step 1.]
Call Algorithm \ref{alg:triangulation} (Index Invariant Triangulation)
with input
 ${\mathcal Q}^h, {\mathcal P}$ and  $\Phi$ and compute $h$ and $\Delta$.
\item[Step 2.]
Construct the cell complex ${\mathcal C}(\Delta)$ (following
its definition given in Section \ref{subsec:cellcomplex}).
\item[Step 3.]
For each cell $D \in {\mathcal C}(\Delta)$,
compute, using 
the algorithm implicit in Theorem  \ref{the:triangulation} (Triangulation),
the cell complex ${\mathcal K}(D)$.
\item[Step 4.]
Compute a description of ${\mathcal K}(B,V)$, 
including 
the matrices corresponding to the differentials in the complex 
$\Ch_{\bullet}({\mathcal K}(B,V)) $.
\item[Step 5.] 
Compute the Betti numbers of the complex 
$\Ch_{\bullet}({\mathcal K}(B,V))$ using linear algebra.
\end{algorithm}

\vspace{.1in}
\noindent
{\sc Complexity Analysis:}
The complexity of the algorithm is 
$ 
(s\ell m  d)^{2^{O(m+k)}},
$
using the complexity of Algorithm \ref{alg:triangulation}.
\qedsymbol

\vspace{.1in}
\noindent
{\sc Proof of Correctness:}
The correctness of the algorithm is a consequence of 
the correctness of Algorithm \ref{alg:triangulation} and 
Proposition \ref{pro:iso2}.
\qedsymbol

\subsection{Computing Betti numbers in the homogeneous intersection case}

\subsubsection{Definition of ${\mathcal K}(B_I,V)$}

We now define a subcomplex of ${\mathcal K}(B,V)$ corresponding to 
a subset $I\subset [m]$.

We first extend a few definitions from Section \ref{sec:proof}.

For each subset $I \subset [m]$,
we denote by 
 ${\mathcal Q}^h_I$ the subset of ${\mathcal Q}^h$ of polynomials with indices 
in $I$ and by $\Omega_{I}$ the subset of 
$$
\Omega =
\{\omega \in \R^{m} \mid  |\omega| = 1, \omega_i \leq 0, 1 \leq i \leq m\},
$$
obtained by setting the coordinates corresponding to the elements of
$[m]\setminus I$ to $0$.
More precisely,
$$
\Omega_I = 
\{\omega \in \R^{m} \mid  |\omega| = 1, \omega_i \leq 0, \mbox{ for } i \in I, 
\mbox{ and } \omega_i = 0 \mbox{ for } i \in [m]\setminus I\}.
$$
Note that we have a natural inclusion
$\Omega_{I} \hookrightarrow \Omega_{[m]}=\Omega$.

Similarly, we denote by $F_{I} \subset F = F_{[m]}$, the set 
$\Omega_{I} \times V$, and
denote by $B_{I} \subset \Omega_{I} 
\times \Sphere^{\ell} \times V$ 
the semi-algebraic set defined by
\[
B_{I} = \{ (\omega,y,x)\mid \omega \in \Omega_{I}, y\in \Sphere^{\ell}, x \in 
V,  \; \la \omega,{\mathcal Q}^h \ra (y,x) \geq 0\}.
\]

We denote by $\varphi_{1,I}: B_{I} \rightarrow 
F_{I}$ and 
$\varphi_{2,I}: B_{I} \rightarrow \Sphere^{\ell} \times V$ the two projection 
maps.

Now we define  ${\mathcal K}(B_I,V)$ for every $I\subset [m]$.

\begin{definition}
\label{eqn:definitionofcellsimplexI}
The complex
${\mathcal K}(B_I,V)$ is the union of all the complexes
${\mathcal K}(D)$  in ${\mathcal C}(\Delta_I)$  , where ${\mathcal C}(\Delta_I)$ is the subcomplex of
 ${\mathcal C}(\Delta)$ consisting of cells contained in 
$\Delta_I=h^{-1}(F_I)$.
\end{definition}

Using proofs similar to the ones give for ${\mathcal K}(B,V)$, we have
\begin{proposition}
\label{pro:iso2I}
$|{\mathcal K}(B_I,V)|$ is homotopy equivalent to $B_I$. \qed
\end{proposition}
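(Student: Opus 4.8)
The plan is to mirror exactly the proof of Proposition~\ref{pro:iso2}, restricting every construction along the inclusion $F_I \hookrightarrow F$. First I would observe that by Proposition~\ref{pro:homotopy1} applied to the family ${\mathcal Q}^h_I$ and the same parameter set $V$ — equivalently, by noting that the index filtration and the eigenspace data for $\la\omega,{\mathcal Q}^h\ra(\cdot,x)$ over $\omega\in\Omega_I$ coincide with those over $\Omega$ after setting the coordinates outside $I$ to zero — the semi-algebraic set $C_I = C \cap (\Omega_I \times \Sphere^\ell \times V)$ is homotopy equivalent to $B_I$. The key point making this legitimate is that the index-invariant triangulation $h\colon\Delta\to F$ already respects all the weak sign conditions on ${\mathcal P}$ and ${\mathcal A}$, and in particular respects each $F_I$ (as noted in Definition~\ref{def:iit}); hence $\Delta_I = h^{-1}(F_I)$ is a genuine subcomplex of $\Delta$ and restricting to it loses nothing.

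Next I would set $X^{I}_{m+k} = |{\mathcal K}(B_I,V)|$ and, for $0\le j\le m+k-1$, define $X^{I}_{j} = \lim_{\eps_j} X^{I}_{j+1}$, exactly as in the proof of Proposition~\ref{pro:iso2} but with the smaller complex. Since ${\mathcal C}(\Delta_I)$ is a subcomplex of ${\mathcal C}(\Delta)$ built from the very same cells $D_\tau$, $D_{\sigma,\tau}$ (now only for simplices $\tau\in\Delta_I$), the infinitesimal thickening and the associated $\lim_{\eps_j}$ operations restrict verbatim. The Vietoris--Smale theorem \cite{Smale} then gives, for each $j$, that $\E(X^{I}_j,\R\la\eps_0,\ldots,\eps_j\ra)$ is homotopy equivalent to $X^{I}_{j+1}$, precisely as before — the only thing being used is that each $X^{I}_{j+1}$ retracts onto $X^{I}_j$, which is a local property of the thickening shared with the full complex. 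Iterating, $X^{I}_0 = \lim_{\eps_0}|{\mathcal K}(B_I,V)|$, and by the construction of ${\mathcal K}(B_I,V)$ (restriction of the cells of ${\mathcal K}(B,V)$ lying over $h(\Delta_I)$) this equals $C_I$. Combining with the first step, $|{\mathcal K}(B_I,V)| \simeq X^{I}_0 = C_I \simeq B_I$.

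I do not expect a genuine obstacle here; the content is entirely in checking that each ingredient of the proof of Proposition~\ref{pro:iso2} is compatible with restriction to a subcomplex, and none of them involves anything global about $\Delta$. The one point deserving a sentence of care is the homotopy equivalence $C_I \simeq B_I$: one must note that Proposition~\ref{pro:homotopy1} was stated and proved (in \cite{BP'R07jems}) for an arbitrary finite family of quadratic forms in the $Y$ variables with parameters, so it applies directly to $\{Q_i^h : i\in I\}$ with $\Omega$ replaced by $\Omega_I\cong\Omega(\text{for the }I\text{-family})$; alternatively, and more in keeping with the present setup, one observes that $L^+(\omega,x)$ and the cell decomposition ${\mathcal K}(D,v)$ depend only on the restriction of the data to $F_I$ when $(\omega,x)\in F_I$, so the subcomplex ${\mathcal K}(B_I,V)$ is literally the complex one would build by running Algorithm~\ref{alg:union_betti0} on the input $({\mathcal Q}^h_I,{\mathcal P},\Phi)$ — up to which triangulation of $F_I$ is chosen — and hence Proposition~\ref{pro:iso2} applies to it directly. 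This is exactly the "proofs similar to the ones given for ${\mathcal K}(B,V)$" that the statement alludes to, so the write-up can be kept to a couple of lines.
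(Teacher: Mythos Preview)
Your proposal is correct and follows exactly the approach the paper indicates: the paper gives no separate proof of Proposition~\ref{pro:iso2I} beyond the line ``Using proofs similar to the ones given for ${\mathcal K}(B,V)$,'' and your argument is precisely that restriction of the proof of Proposition~\ref{pro:iso2} to the subcomplex $\Delta_I$, with the compatibility checks (that $h$ respects $F_I$, that Proposition~\ref{pro:homotopy1} applies to the subfamily, and that the Vietoris--Smale step restricts) spelled out explicitly.
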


\begin{algorithm}[Computing the collection of ${\mathcal K}(B_I,V)$,
$I \subset \lbrack 1\ldots,m \rbrack$]
\label{alg:union_betti}
\item[]
\item[{\sc Input}]
\item[]
\begin{itemize}
\item
$
\displaystyle{
{\mathcal Q}^h = \{Q_1^h,\ldots,Q_m^h\} \subset \R[Y_0,\ldots,Y_\ell,X_1,\ldots,X_k],
}
$
where each $Q_i^h$ is homogeneous of degree $2$ in the variables $Y_0,\ldots,Y_\ell$,
and of degree at most $d$ in $X_1,\ldots,X_k$,
\item
$
{\mathcal P} 
\subset \R[X_1,\ldots,X_k],
$
with $\deg(P) \leq d,  P \in {\mathcal P}$,
\item
a ${\mathcal P}$-closed formula $\Phi(x)$ defining a bounded
${\mathcal P}$-closed semi-algebraic set $V \subset \R^k$.
\end{itemize}

\item [{\sc Output}]
\item[]
\item
\begin{itemize}
\item
For each subset $I \subset [m]$,
a description of the cell complex 
${\mathcal K}(B_I,V)$.
\item
For each $I \subset J \subset [m]$, a homomorphism
\[
i_{I,J}: \Ch_{\bullet}(B_I,V) \rightarrow \Ch_\bullet(B_J,V)
\]
inducing the inclusion  homomorphism
$i_{I,J *}: \HH_*(B_I,V) \rightarrow \HH_*(B_J,V)$.
\end{itemize}
\item [{\sc Procedure}]
\item[]
\item[Step 1.]
Call Algorithm \ref{alg:union_betti0} to compute ${\mathcal K}(B,V)$.
\item[Step 2.]
Give a description of ${\mathcal K}(B_I,V)$ for each $I \subset [m]$
and compute the matrices corresponding to the differentials in the complex 
$\Ch_{\bullet}({\mathcal K}(B_I,V)) $.
\item[Step 3.]
For
$I \subset J\subset [m]$ with
compute the matrices for the homomorphisms of complexes,
$$
{i}_{I,J}: \Ch_{\bullet}({\mathcal K}(B_I,V)) \rightarrow \Ch_\bullet({\mathcal K}(B_J,V))
$$
in the following way.

The complex ${\mathcal K}(B_I,V)$ 
is a subcomplex of 
${\mathcal K}(B_J,V)$ by construction.
Compute the matrix for the inclusion  homomorphism,
$$
i_{I,J}:
\Ch_{\bullet}({\mathcal K}(B_I,V))
\rightarrow 
\Ch_{\bullet}({\mathcal K}(B_J,V))
$$ 
and output the matrix for the homomorphism. 
\end{algorithm}

\vspace{.1in}
\noindent
{\sc Complexity Analysis:}
The complexity of the algorithm is 
$ 
(s\ell m  d)^{2^{O(m+k)}},
$
using the complexity of Algorithm \ref{alg:triangulation}.
\qedsymbol

\vspace{.1in}
\noindent
{\sc Proof of Correctness:}
The correctness of the algorithm is a consequence of 
the correctness of Algorithm \ref{alg:triangulation} and 
Proposition \ref{pro:iso2}.
\qedsymbol

\subsubsection{Algorithm for 
computing the Betti numbers in the
homogeneous
intersection case}

Let
$W^h \subset \Sphere^{\ell} \times \R^k$ be the semi-algebraic set defined by
$$
\displaylines{
W^h = \bigcap_{Q \in {\mathcal Q}^h}
\{ (y,x) \;\mid\; \vert y \vert =1 \wedge Q(y,x) \leq 0\; \wedge \; \Phi(x)\},
}
$$
using Notation \ref{not:AhWh}.

Then 
\begin{equation}
\label{eqn:bicomplex}
\HH_*(W^h) \cong \HH^*(\Tot_\bullet(\NN_{\bullet,\bullet}({\mathcal K}(B,V)))),
\end{equation}

where $\NN_{\bullet,\bullet}({\mathcal K}(B,V))$ is the bi-complex
\begin{equation}\label{eqn:bicompNpq}
\NN_{p,q}({\mathcal K}(B,V)) = \bigoplus_{J \subset [m],\#(J) = p+1} \Ch_q({\mathcal K}(B_J,V)), 
\end{equation}
with the horizontal and vertical differentials defined as follows.
The vertical differentials
\begin{equation}\label{eqn:vertdiff}
d_{p,q}: \NN_{p,q}({\mathcal K}(B,V)) \rightarrow \NN_{p,q-1}({\mathcal K}(B,V)),
\end{equation}
are induced by the boundary homomorphisms,
\begin{equation*}
\partial_q: \Ch_q({\mathcal K}(B_I,V)) \rightarrow \Ch_{q-1}({\mathcal K}(B_J,V)), 
\end{equation*}
and the horizontal differentials
\begin{equation}\label{eqn:hordiff}
\delta_{p,q}: \NN_{p,q}({\mathcal K}(B,V)) \rightarrow \NN_{p+1,q}({\mathcal K}(B,V))
\end{equation}
are defined by
$$
\displaylines{
(\delta_{p,q}(\varphi))_{J} =
\sum_{j \in J} i_{J\setminus \{j\},J}(\varphi_{J\setminus \{j\}}),
}
$$
where $J \subset [m], \#(J)=p+1$, 
$$
\displaylines{
\varphi \in \NN_{p,q}({\mathcal K}(B,V)) = \bigoplus_{J \subset [m],\#(J) = p+1} \Ch_\bullet({\mathcal K}(B_J,V)),
}
$$
and for $I \subset J \subset [m]$
\[
i_{I,J}: \Ch_{\bullet}({\mathcal K}(B_I,V)) \rightarrow \Ch_\bullet({\mathcal K}(B_J,V))
\] 
denotes the homomorphism induced by inclusion.

For a proof of (\ref{eqn:bicomplex}) see \cite{Bas05-top}.

Using (\ref{eqn:bicomplex}),  we are able to compute the Betti numbers of 
$W^h$ 
using only linear algebra,
once we have computed 
the various complexes ${\mathcal K}(B_I,V)$, as well as the homomorphisms
$i_{I,J}$ for all $ I\subset J \subset [m]$ using Algorithm 
\ref{alg:union_betti}. Moreover, the complexity of this algorithm is 
asymptotically the same as that of Algorithm \ref{alg:union_betti}.

We now formally describe this algorithm.

\begin{algorithm}[Betti numbers, homogeneous intersection case]
\label{alg:basic_betti}
\item[]
\item[{\sc Input}]
\item[]
\begin{itemize}
\item
A family of polynomials,
$
{\mathcal Q}^h =\{Q_1^h,\ldots,Q_m^h\} \subset  \R[Y_0,\ldots,Y_\ell,X_1,\ldots,X_k],
$
homogeneous of degree 2 with respect to 
$Y_0,\ldots,Y_\ell$,
 $
\deg_{X}(Q^h) \leq d, 
Q^h \in {\mathcal Q}^h$, 
\item
another family, 
${\mathcal P} 
\subset \R[X_1,\ldots,X_k]$ with
$ \deg_{X}(P) \leq d, P \in {\mathcal P}, \#({\mathcal P})=s$,

\item
a formula $\Phi$ defining a bounded 
${\mathcal P}$-closed semi-algebraic set $V$.

%%sb hides
\hide{
\item the semi-algebraic set
$$
\displaylines{
W^h = \bigcap_{Q^h\in {\mathcal Q}^h}
\{ (y,x) \;\mid\; |y|=1\; \wedge\; Q^h(y,x) \leq 0\; \wedge \; \Phi(x)\}.
}
$$
}
\end{itemize}

\item [{\sc Output}] the Betti numbers $b_i(W^h)$,
where $W^h$ is
the semi-algebraic set defind by
$$
\displaylines{
W^h = \bigcap_{Q^h\in {\mathcal Q}^h}
\{ (y,x) \;\mid\; |y|=1\; \wedge\; Q^h(y,x) \leq 0\; \wedge \; \Phi(x)\}.
}
$$

\item [{\sc Procedure}]
\item[]
\item[Step 1.]
Call Algorithm \ref{alg:union_betti} 
(Computing the collection of ${\mathcal K}(B_I,V)$) to
compute for each $I \subset J \subset [m]$, the complex
$\Ch_\bullet({\mathcal K}(B_I,V))$ using the natural basis consisting of the cells
of ${\mathcal K}(B_I,V)$ of various dimensions, as well as the matrices in this basis 
for the  inclusion homomorphisms
$$
\displaylines{
i_{I,J}:\Ch_\bullet({\mathcal K}(B_I,V)) \rightarrow 
\Ch_\bullet({\mathcal K}(B_J,V)).
}
$$
\item[Step 2.]
Using the data from the previous step,
compute matrices corresponding to the differentials in the complex,
$\Tot_\bullet(\NN_{\bullet,\bullet}({\mathcal K}(B,V)))$,
where $\NN_{\bullet,\bullet}({\mathcal K}(B,V))$ is the bi-complex described by
\eqref{eqn:bicompNpq}-\eqref{eqn:hordiff}.
\item [Step 3.]
Compute, using linear algebra subroutines
$$
\displaylines{
b_i(W^h) = \HH_i((\Tot_\bullet(\NN_{\bullet,\bullet}({\mathcal K}(B,V))))).
}
$$
\end{algorithm}

\vspace{.1in}
\noindent
{\sc Complexity Analysis:}
The complexity of the algorithm is dominated by the first step, whose complexity is
$ 
(s\ell m d)^{2^{O(m+k)}},
$
using the complexity of Algorithm \ref{alg:union_betti}.
\qedsymbol

\vspace{.1in}
\noindent
{\sc Proof of Correctness:}
The correctness of the algorithm is a consequence of 
the correctness of Algorithm \ref{alg:union_betti} and 
\eqref{eqn:bicomplex}.
\qedsymbol

\subsection{Computing Betti numbers of general ${\mathcal P}\cup {\mathcal Q}$-closed 
sets}
Let $S \subset  \R^{\ell+k}$ 
be a  semi-algebraic set 
defined by a ${\mathcal P}\cup {\mathcal Q}$
closed formula $\Phi$. 

Let $\bar\Sigma_{{\mathcal Q}}$ denote the set of all possible weak 
sign conditions on the family ${\mathcal Q}$, i.e.
\[
\bar\Sigma_{{\mathcal Q}} = \{0, \{0,1\},\{0,-1\}\}^{{\mathcal Q}}.
\]

In the last section we defined a bi-complex 
$\NN_{\bullet,\bullet}({\mathcal K}(B,V))$ whose total complex has
homology groups isomorphic to these of the semi-algebraic set 
$W^h = \RR(\rho^h\cap\phi)$, where 
$\rho \in \bar\Sigma_{{\mathcal Q}}$ is given by 
${\rho}(Q_i) = \{0,-1\}$ for each $i, 1\leq i \leq m$, and
$\rho^h$ is obtained from $\rho$ by replacing 
each $Q_i \in {\mathcal Q}$ by ${\mathcal Q_i}^h$.
We now generalize this definition to the case
of multiple weak sign conditions. More precisely,
given a set 
$\Sigma = 
\{\rho_1,\ldots,\rho_N\} \subset \bar\Sigma_{{\mathcal Q}}$, 
we define a corresponding bi-complex having properties similar to that
of $\NN_{\bullet,\bullet}({\mathcal K}(B,V))$, but now with respect
to $\Sigma$ instead of a single weak sign condition $\rho$.

Without loss of generality we can write $\Phi$ in the 
form
\[
\Phi = \bigvee_{\rho \in \bar\Sigma_{{\mathcal Q}}} \rho \wedge \phi_\rho,
\]
where each $\phi_\rho$ is a ${\mathcal P}$-closed formula.

Let 
\begin{align*}
W_\rho =& \RR(\rho \wedge \phi_\rho, \R^{\ell+k}), \\
V_\rho =& \RR(\phi_\rho,\R^k).
\end{align*}

Let $1 \gg \eps > 0$ be an infinitesimal, and let

\begin{align*}
Q_0 =& \eps^2(Y_1^2 + \cdots + Y_\ell^2) - 1,\\
P_0 =& \eps^2(X_1^2 + \cdots + X_k^2) - 1,
\end{align*}

and
$S_b \subset \R\la\eps\ra^{\ell+k}$ be the
semi-algebraic set defined by
$$
\displaylines{
S_b = \bigcap_{i=0}^{m}
\{ (y,x) \;\mid\;  Q_0(y) \leq 0\; \wedge \;P_0(x) \leq 0
\; \wedge \; \Phi(x)\}.
}
$$

We denote by $\Phi_b$ (resp. $\phi_{\rho,b}$) the formula
$(Q_0(y) \leq 0) \;\wedge\; (P_0(x) \leq 0)\; \wedge \; \Phi$ (resp. 
$(P_0(x) \leq 0)\; \wedge \; \phi_\rho$.)

Let $S_b^h, W_{\rho,b}^h\subset 
\Sphere^{\ell} \times \R\la\eps\ra^k$ be the sets defined
by $\Phi_b$ and $\rho\; \wedge \; (Q_0^h \leq 0)\;\wedge \;\phi_{\rho,b}$ 
respectively
on $\Sphere^{\ell} \times \R\la\eps\ra^k$  after 
replacing each $Q_i \in {\mathcal Q}$ by $Q_i^h$ 
in the formulas $\Phi_b$ and $\rho$.

Let 
\[
V_{\rho,b}= \RR(\phi_{\rho,b},\R\la\eps\ra^k).
\]
Let ${\mathcal Q}^{h}_{\pm} = \{ \pm Q^h \;\mid\; Q^h \in {\mathcal Q}^h \}$,
and let ${\mathcal K}(B,V)$ denote the complex constructed by Algorithm 
\ref{alg:union_betti}, with input the families of
polynomials ${\mathcal Q}^{h}_{\pm}$, 
${\mathcal P}_b = {\mathcal P} \cup \{P_0\}$,
and the semi-algebraic subset $V = B_k(0,1/\eps)$.

It follows from the correctness of Algorithm 
\ref{alg:basic_betti} that for each $\rho \in \bar\Sigma_{{\mathcal Q}}$,
there exists 
$J_\rho \subset {\mathcal Q}^{h}_{\pm}$, and 
a subcomplex, ${\mathcal K}(B_{J_\rho},V_{\rho,b}) \subset 
{\mathcal K}(B,V)$,
such that 
\[
\HH_*(\Tot_\bullet(\NN_{\bullet,\bullet}({\mathcal K}(B_{J_\rho},V_{\rho,b})))
\cong
\HH_*(W_{\rho,b}^h).
\]

More generally, for any
$\Sigma = 
\{\rho_1,\ldots,\rho_N\} \subset \bar\Sigma_{{\mathcal Q}}$, 
there exists a subcomplex, ${\mathcal K}(B_{J_{\rho}},V_{\rho_1,b} \cap \cdots
V_{\rho_N,b}) \subset 
{\mathcal K}(B,V)$,
such that the homology groups of the complex
\begin{equation}
\label{eqn:defofch}
\Ch_{\Sigma,\bullet} = 
\Tot_\bullet(\NN_{\bullet,\bullet}({\mathcal K}(B_{J_{\rho}},V_{\rho_1,b} \cap \cdots
V_{\rho_N,b}))
\end{equation}
are naturally isomorphic to those of $W_{\rho,b}^h$, 
where $\rho$ is the common refinement of $\rho_1,\ldots,\rho_N$
defined by
\begin{equation}
\label{eqn:defofrefinement}
\rho(P) = \bigcap_{i=0}^{N}\rho_i(P)
\end{equation}
for each $P \in {\mathcal A}$.
Moreover, for $\Sigma \subset \Sigma' \subset  \bar\Sigma_{{\mathcal Q}}$, 
there exists a natural homomorphism,
\[
i_{\Sigma,\Sigma'} : \Ch_{\Sigma',\bullet} \rightarrow \Ch_{\Sigma,\bullet}
\]
such that the induced homomorphism,
\[
i_{\Sigma,\Sigma',*} : \HH_*(\Ch_{\Sigma',\bullet})\rightarrow 
\HH_*(\Ch_{\Sigma,\bullet})
\]
is the one induced by the inclusion
\[
\bigcap_{\rho \in \Sigma'} W_{\rho,b}^h \hookrightarrow \bigcap_{\rho \in 
\Sigma} W_{\rho,b}^h.
\]

\begin{definition}
Let $\Ch_{\bullet}(\Phi)$ denote the complex defined by
\begin{equation}
\label{eqn:defofChphi}
\Ch_{\bullet}(\Phi) = \Tot_{\bullet}(\NN_{\bullet,\bullet}(\Phi)), 
\end{equation}
where
\begin{equation}
\label{eqn:defofNN}
\NN_{p,q}(\Phi) = \bigoplus_{\Sigma \subset \bar\Sigma_{{\mathcal Q}},\#(\Sigma) = p+1} 
\Ch_{\Sigma,q}.
\end{equation}
The vertical and horizontal homomorphisms in the complex
$\NN_{\bullet,\bullet}(\Phi)$ are induced by the the differentials in the
individual complexes  $\Ch_{\Sigma,\bullet}$ and the inclusion homomorphisms
$i_{\Sigma,\Sigma'}$ respectively.
\end{definition}

By the properties of the complexes $\Ch_{\Sigma,\bullet}$ stated
above and the exactness of the generalized Mayer-Vietoris sequence, we
obtain
\begin{theorem}
\label{the:bettigeneral}
\[
\HH_*(S_b^h) \cong \HH_*(\Ch_{\bullet}(\Phi)).
\]
\end{theorem}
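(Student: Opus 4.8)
The plan is to recognize $\Ch_{\bullet}(\Phi)$ as a computable model for the generalized Mayer--Vietoris double complex attached to the closed cover of $S_b^h$ by the sets $W_{\rho,b}^h$, and then to conclude by a standard spectral-sequence comparison. Unwinding the definitions of $\Phi_b$ and $\phi_{\rho,b}$ --- using $\Phi=\bigvee_{\rho}\rho\wedge\phi_\rho$ --- one checks that $\Phi_b=\bigvee_{\rho\in\bar\Sigma_{{\mathcal Q}}}\bigl(\rho\wedge(Q_0\le 0)\wedge\phi_{\rho,b}\bigr)$, so that after homogenizing in $Y$ and passing to $\Sphere^{\ell}\times\R\la\eps\ra^{k}$ one obtains $S_b^h=\bigcup_{\rho\in\bar\Sigma_{{\mathcal Q}}}W_{\rho,b}^h$, a finite union of closed semi-algebraic sets, all contained in the compact set $\Sphere^{\ell}\times B_k(0,1/\eps)$. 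For $\Sigma\subset\bar\Sigma_{{\mathcal Q}}$ write $W_\Sigma$ for the set associated to $\Sigma$ (the intersection $\bigcap_{\rho\in\Sigma}W_{\rho,b}^h$, equivalently $W_{\rho,b}^h$ for the common refinement $\rho$ of $\Sigma$); by the construction recalled just before the statement, $\HH_{*}(\Ch_{\Sigma,\bullet})\cong\HH_{*}(W_\Sigma)$, and this identification is compatible with the maps $i_{\Sigma,\Sigma'}$ in the sense that each $i_{\Sigma,\Sigma'}$ realizes the inclusion-induced map $\HH_{*}(W_{\Sigma'})\to\HH_{*}(W_\Sigma)$ for $\Sigma\subset\Sigma'$.

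Next I would set up the genuine Mayer--Vietoris double complex for this cover. Using Theorem~\ref{the:triangulation}, choose a semi-algebraic triangulation of $\Sphere^{\ell}\times B_k(0,1/\eps)$ compatible with every $W_{\rho,b}^h$, so that each $W_\Sigma$ is supported on a subcomplex. For each simplex $\tau$ of the triangulation the index set $\{\rho\in\bar\Sigma_{{\mathcal Q}}:\tau\subset W_{\rho,b}^h\}$ is non-empty, so the local \v{C}ech complex at $\tau$ is the augmented simplicial chain complex of a full simplex and hence acyclic; summing over simplices shows that the augmented complex of chain complexes
\[
\cdots\to\bigoplus_{\#\Sigma=p+1}\Ch_{\bullet}(W_\Sigma)\to\cdots\to\bigoplus_{\rho}\Ch_{\bullet}(W_{\rho,b}^h)\to\Ch_{\bullet}(S_b^h)\to 0
\]
is exact in each chain degree. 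Therefore the total complex of the double complex ${\mathcal M}_{p,q}:=\bigoplus_{\#\Sigma=p+1}\Ch_q(W_\Sigma)$, with vertical differentials the simplicial boundaries and horizontal differentials the alternating sums of the maps induced by the inclusions $W_\Sigma\hookrightarrow W_{\Sigma\setminus\{\rho\}}$, satisfies $\HH_{*}(\Tot_{\bullet}{\mathcal M})\cong\HH_{*}(S_b^h)$. This is exactly the mechanism already invoked for \eqref{eqn:bicomplex}.

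Finally I would compare ${\mathcal M}$ with $\NN_{\bullet,\bullet}(\Phi)$. Both are bounded double complexes: there are finitely many $\Sigma$, the chain degree ranges over a fixed finite interval, and each $\Ch_{\Sigma,\bullet}$ is itself a bounded complex, so the spectral sequences obtained by taking vertical homology first both converge. By the properties recalled before the statement, both spectral sequences have $E_1^{p,q}=\bigoplus_{\#\Sigma=p+1}\HH_q(W_\Sigma)$ with the same $d_1$ (the alternating sum of the inclusion-induced maps --- this being precisely what the two displayed assertions preceding the theorem say about the $i_{\Sigma,\Sigma'}$). Since the complexes $\Ch_{\Sigma,\bullet}$ were all produced as subcomplexes of the single cell complex ${\mathcal K}(B,V)$ output by Algorithm~\ref{alg:union_betti}, with the $i_{\Sigma,\Sigma'}$ genuine maps of complexes, the column-wise quasi-isomorphisms $\Ch_{\Sigma,\bullet}\simeq\Ch_{\bullet}(W_\Sigma)$ (Propositions~\ref{pro:iso2} and~\ref{pro:iso2I}) assemble into a morphism (or short zig-zag) of double complexes which is a quasi-isomorphism on every column; the comparison theorem for spectral sequences of bounded double complexes then yields $\HH_{*}(\Ch_{\bullet}(\Phi))=\HH_{*}(\Tot_{\bullet}\NN_{\bullet,\bullet}(\Phi))\cong\HH_{*}(\Tot_{\bullet}{\mathcal M})\cong\HH_{*}(S_b^h)$.

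I expect the third paragraph to be the only delicate point: promoting the term-wise isomorphisms $\HH_{*}(\Ch_{\Sigma,\bullet})\cong\HH_{*}(W_\Sigma)$, together with their compatibility with the $i_{\Sigma,\Sigma'}$, to an honest comparison of double complexes --- equivalently, making the chosen quasi-isomorphisms functorial in $\Sigma$. This is where it is essential that all the $\Ch_{\Sigma,\bullet}$ sit inside one complex ${\mathcal K}(B,V)$ with the $i_{\Sigma,\Sigma'}$ realized as inclusions of subcomplexes; granting that, what remains is the routine bookkeeping that the vertical-filtration $E_1$-pages and $d_1$-differentials of $\NN_{\bullet,\bullet}(\Phi)$ and of the genuine Mayer--Vietoris double complex of $\{W_{\rho,b}^h\}$ coincide.
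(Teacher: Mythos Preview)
Your proposal is correct and follows essentially the same route as the paper: the paper's proof is the single sentence ``By the properties of the complexes $\Ch_{\Sigma,\bullet}$ stated above and the exactness of the generalized Mayer--Vietoris sequence,'' and your three paragraphs are a careful unpacking of exactly that. You even flag the one step the paper glosses over --- promoting the columnwise quasi-isomorphisms and their compatibility with the $i_{\Sigma,\Sigma'}$ to an honest comparison of double complexes --- which is the right place to put the emphasis.
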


We are now in a position to describe formally 
the algorithm for computing all the
Betti numbers of a given ${\mathcal P}\cup{\mathcal Q}$-closed set $S$.

\subsubsection{Description of the algorithm in the general case}
\begin{algorithm} [Betti numbers, general case]
\label{alg:general_betti}
\item[{\sc Input}]
\item[]
\begin{itemize}
\item A family of polynomials ${\mathcal Q} = \{Q_1,\ldots,Q_m\} \subset \R[Y_1,\ldots,Y_\ell,X_1,\ldots,X_k]$,
with
$
\deg_Y(Q_i) \leq 2, \deg_X(Q_i) \leq d, 1 \leq i \leq \ell,
$
\item
another family of polynomials ${\mathcal P} 
\subset \R[X_1,\ldots,X_k]$
with 
$\deg(P) \leq d,  P \in {\mathcal P}$,
\item a ${\mathcal Q} \cup {\mathcal P}$-closed semi-algebraic set $S$
defined by a ${\mathcal Q} \cup {\mathcal P}$-closed formula $\Phi$.

\end{itemize}
\item [{\sc Output}]
the Betti numbers $b_0(S),\ldots,b_{k+\ell-1}(S)$.

\item [{\sc Procedure}]
\item[]
\item[Step 1.]
Define  $Q_0=\eps_0^2(Y_1^2+\ldots+Y_\ell^2)-1$, $P_0=\eps_0^2(X_1^2+\ldots+X_k^2)-1$.
Replace 
$S$ by $\RR(S,\R \la \eps \ra)\cap (\RR(P_0 \le 0) \times \RR(Q_0 \le 0))$.

\item[Step 2.]
Define ${\mathcal Q}^{h}_{\pm} = \{ \pm Q^h \;\mid\; Q^h \in 
{\mathcal Q}^h \} \cup \{Q_0^h\}$,
and let ${\mathcal K}(B,V)$ denote the complex constructed by Algorithm 
\ref{alg:union_betti}, with input the families of
polynomials ${\mathcal Q}^{h}_{\pm}, {\mathcal P}_b$,
and the semi-algebraic set $V = B_k(0,1/\eps) \subset \R\la\eps\ra^k$.

\item[Step 3.]
Compute, using the definitions given above, the matrices corresponding to the
differentials in the complex
$\Ch_{\bullet}(\Phi)$.

\item[Step 4.]
Compute, using linear algebra subroutines, for each $i, 0 \leq i \leq k+\ell-1$ 
$$
\displaylines{
b_i(S_b^h) = \HH_i(\Ch_{\bullet}(\Phi)).
}
$$
\item [Step 5.]
Output for each $i, 0 \leq i \leq k+\ell-1$, 
\[
b_i(S) =  \frac{1}{2} b_i(S_b^h).
\]
\end{algorithm}
\noindent\textsc
{Proof of Correctness:}
The correctness of the algorithm follows from Theorem \ref{the:bettigeneral}
and the correctness of Algorithm \ref{alg:union_betti}.
\qedsymbol

\noindent\textsc
{Complexity Analysis:}
Since $\#(\bar\Sigma_{{\mathcal Q}}) = 3^m$, the number of subsets
that enters in the definition of $\NN_{\bullet,\bullet}(\Phi)$ 
(cf.  \eqref{eqn:defofNN}) is at most
$2^{3^m}$.
The complexity of the algorithm is now seen to be
$ 
(s\ell m d)^{2^{O(m+k)}},
$
using the complexity of Algorithm \ref{alg:union_betti}.
\qedsymbol

\begin{proof}[Proof of Theorem \ref{the:algo-betti}]
The correctness and complexity analysis of Agorithm \ref{alg:general_betti}
also proves Theorem \ref{the:algo-betti}.
\end{proof}

\bibliographystyle{amsplain}
\bibliography{master}

\end{document}